\documentclass[11pt,letterpaper]{article}
\usepackage{epsfig}
\usepackage{amssymb}
\usepackage{amsthm}
\usepackage{amsmath}
\usepackage[margin=1in]{geometry}

\newtheorem{theorem}{Theorem}[section]
\newtheorem{lemma}[theorem]{Lemma}

\newtheorem{proposition}[theorem]{Proposition}

\newtheorem{definition}[theorem]{Definition}
\newtheorem{remark}[theorem]{Remark}

\newtheorem{problem}{Problem}
\newtheorem{solution}{Solution}

\newcommand{\R}{{\mathbb R}}



\newcommand{\ones}{\mathbf 1}

\newcommand{\one}{\mathbf{1}}
\newcommand{\Sn}{\mathbf{S}^n}
\newcommand{\e}{\mathbf{e}}
\newcommand{\by}{\mathbf{y}}
\newcommand{\bd}{\mathbf{d}}
\newcommand{\A}{\mathcal{A}}
\newcommand{\bx}{\mathbf{x}}
\newcommand{\bz}{\mathbf{z}}
\newcommand{\bq}{\mathbf{q}}
\newcommand{\N}{\mathcal{N}}
\newcommand{\T}{\mathcal{T}}

\newcommand{\tr}{\mathop{\mathrm{Tr}}}

\begin{document}
\title{Convex Graph Invariants}

\author{Venkat Chandrasekaran, Pablo A. Parrilo, and Alan S. Willsky \thanks{Email: \{venkatc,parrilo,willsky\}@mit.edu.
This work was supported in part by AFOSR grant FA9550-08-1-0180, in part by a MURI through ARO grant W911NF-06-1-0076, in part by a MURI through AFOSR grant FA9550-06-1-0303, and in part by NSF FRG 0757207. } \vspace{0.25in} \\ Laboratory for Information and Decision Systems \\ Department of Electrical Engineering and Computer Science \\ Massachusetts Institute of Technology \\ Cambridge, MA 02139 USA}
\date{\today}

\maketitle

\begin{abstract}
%
%
%
%
%
%
%
%

The structural properties of graphs are usually characterized in terms of invariants, which are functions of graphs that do not depend on the labeling of the nodes. In this paper we study convex graph invariants, which are graph invariants that are convex functions of the adjacency matrix of a graph.  Some examples include functions of a graph such as the maximum degree, the MAXCUT value (and its semidefinite relaxation), and spectral invariants such as the sum of the $k$ largest eigenvalues.  Such functions can be used to construct convex sets that impose various structural constraints on graphs, and thus provide a unified framework for solving a number of interesting graph problems via convex optimization.  We give a representation of all convex graph invariants in terms of certain elementary invariants, and describe methods to compute or approximate convex graph invariants tractably.  We also compare convex and non-convex invariants, and discuss connections to robust optimization.  Finally we use convex graph invariants to provide efficient convex programming solutions to graph problems such as the deconvolution of the composition of two graphs into the individual components, hypothesis testing between graph families, and the generation of graphs with certain desired structural properties.

{\bf Keywords}: Graphs; graph invariants; convex optimization; spectral invariants; majorization; robust optimization; graph deconvolution; graph sampling; graph hypothesis testing

\end{abstract}

\section{Introduction}
\label{sec:intro}

Graphs are useful in many applications throughout science and engineering as they offer a concise model for relationships among a large number of interacting entities.  These relationships are often best understood using structural properties of graphs.  \emph{Graph invariants} play an important role in characterizing abstract structural features of a graph, as they do not depend on the labeling of the nodes of the graph.  Indeed families of graphs that share common structural attributes are often specified via graph invariants.  For example bipartite graphs can be defined by the property that they contain no cycles of odd length, while the family of regular graphs consists of graphs in which all nodes have the same degree.  Such descriptions of classes of graphs in terms of invariants have found applications in areas as varied as combinatorics \cite{Die2005}, network analysis in chemistry \cite{Bon1991} and in biology \cite{MasV2007}, and in machine learning \cite{Lau1996}.  For instance the treewidth \cite{RobS1984} of a graph is a basic invariant that governs the complexity of various algorithms for graph problems.

We begin by introducing three canonical problems involving structural properties of graphs, and the development of a unified solution framework to address these questions serves as motivation for our discussion throughout this paper.
\begin{itemize}
\item{\bf Graph deconvolution.} Suppose we are given a graph that is the combination of two known graphs overlaid on the same set of nodes.  How do we recover the individual components from the composite graph?  For example in Figure~\ref{fig:dec1} we are given a composite graph that is formed by adding a cycle and the Clebsch graph.  Given no extra knowledge of any labeling of the nodes, can we ``deconvolve'' the composite graph into the individual cycle/Clebsch graph components?

\item{\bf Graph generation.} Given certain structural constraints specified by invariants how do we produce a graph that satisfies these constraints?  A well-studied example is the question of constructing expander graphs.  Another example may be that we wish to recover a graph given constraints, for instance, on certain subgraphs being forbidden, on the degree distribution, and on the spectral distribution.

\item{\bf Graph hypothesis testing.} Suppose we have two families of graphs, each characterized by some common structural properties specified by a set of invariants;  given a new sample graph which of the two families offers a ``better explanation'' of the sample graph (see Figure~\ref{fig:ht})?
\end{itemize}
In Section~\ref{sec:apps} we describe these problems in more detail, and also give some concrete applications in network analysis and modeling in which such questions are of interest.

To efficiently solve problems such as these we wish to develop a collection of tractable computational tools.  Convex relaxation techniques offer a candidate framework as they possess numerous favorable properties.  Due to their powerful modeling capabilities, convex optimization methods can provide tractable formulations for solving difficult combinatorial problems exactly or approximately.  Further convex programs may often be solved effectively using general-purpose off-the-shelf software.  Finally one can also give conditions for the success of these convex relaxations based on standard optimality results from convex analysis.

Motivated by these considerations we introduce and study \emph{convex graph invariants} in Section~\ref{sec:cgi}.  These invariants are convex functions of the adjacency matrix of a graph.  More formally letting $A$ denote the adjacency matrix of a (weighted) graph, a convex graph invariant is a convex function $f$ such that $f(A) = f(\Pi A \Pi^T)$ for all permutation matrices $\Pi$.  Examples include functions of a graph such as the maximum degree, the MAXCUT value (and its semidefinite relaxation), the second smallest eigenvalue of the Laplacian (a concave invariant), and spectral invariants such as the sum of the $k$ largest eigenvalues; see Section~\ref{subsec:excgi} for a more comprehensive list.  As some of these invariants may possibly be hard to compute, we discuss in the sequel the question of approximating intractable convex invariants.  We also study \emph{invariant convex sets}, which are convex sets with the property that a symmetric matrix $A$ is a member of such a set if and only if $\Pi A \Pi^T$ is also a member of the set for all permutations $\Pi$.  Such convex sets are useful in order to impose various structural constraints on graphs.  For example invariant convex sets can be used to express forbidden subgraph constraints (i.e., that a graph does not contain a particular subgraph such as a triangle), or require that a graph be connected; see Section~\ref{subsec:exics} for more examples.  We compare the strengths and weaknesses of convex graph invariants versus more general non-convex graph invariants.  Finally we also provide a robust optimization perspective of invariant convex sets.  In particular we make connections between our work and the data-driven perspective on robust optimization studied in \cite{BerB2009}.


In order to systematically evaluate the expressive power of convex graph invariants we analyze \emph{elementary} convex graph invariants, which serve as a basis for constructing arbitrary convex invariants.  Given a symmetric matrix $P$, these elementary invariants (again, possibly hard to compute depending on the choice of $P$) are defined as follows:
\begin{equation}
\Theta_P(A) = \max_{\Pi} \tr(P \Pi A \Pi^T), \label{eq:introecgi}
\end{equation}
where $A$ represents the adjacency matrix of a graph, and the maximum is taken over all permutation matrices $\Pi$.  It is clear that $\Theta_P$ is a convex graph invariant, because it is expressed as the maximum over a set of linear functions.  Indeed several simple convex graph invariants can be expressed using functions of the form \eqref{eq:introecgi}.  For example $P = I$ gives us the total sum of the node weights, while $P = \one \one^T - I$ gives us twice the total (weighted) degree.  Our main theoretical results in Section~\ref{sec:cgi} can be summarized as follows:  First we give a representation theorem stating that any convex graph invariant can be expressed as the supremum over elementary convex graph invariants \eqref{eq:introecgi} (see Theorem~\ref{theo:cgirep}).  Second we have a similar result stating that any invariant convex set can be expressed as the intersection of convex sets given by level sets of the elementary invariants \eqref{eq:introecgi} (see Proposition~\ref{theo:icsrep}).  These results follow as a consequence of the separation theorem from convex analysis.  Finally we also show that for any two non-isomorphic graphs given by adjacency matrices $A_1$ and $A_2$, there exists a $P$ such that $\Theta_P(A_1) \neq \Theta_P(A_2)$ (see Lemma~\ref{theo:complete}).  Hence convex graph invariants offer a \emph{complete} set of invariants as they can distinguish between non-isomorphic graphs.

In Section~\ref{subsec:spec} we discuss an important subclass of convex graph invariants, namely the set of convex \emph{spectral invariants}.  These are convex functions of symmetric matrices that depend only on the eigenvalues, and can equivalently be expressed as the set of convex functions of symmetric matrices that are invariant under conjugation by orthogonal matrices (note that convex graph invariants are only required to be invariant with respect to conjugation by permutation matrices) \cite{Dav1957}. The properties of convex spectral invariants are well-understood, and they are useful in a number of practically relevant problems (e.g., characterizing the subdifferential of a unitarily invariant matrix norm \cite{Wat92}).  These invariants play a prominent role in our experimental demonstrations in Section~\ref{sec:appscgi}.

As noted above convex graph invariants, and even elementary invariants, may in general be hard to compute.  In Section~\ref{sec:comp} we investigate the question of approximately computing these invariants in a tractable manner.  For many interesting special cases such as the MAXCUT value of a graph, or (the inverse of) the stability number, there exist well-known tractable semidefinite programming (SDP) relaxations that can be used as surrogates instead \cite{GoeW1995,MotS1965}.  More generally functions of the form of our elementary convex invariants \eqref{eq:introecgi} have appeared previously in the literature; see \cite{Cel1998} for a survey.  Specifically we note that evaluating the function $\Theta_P(A)$ for any fixed $A,P$ is equivalent to solving the so-called Quadratic Assignment Problem (QAP), and thus we can employ various tractable linear programming, spectral, and SDP relaxations of QAP \cite{ZhaKRW1998,Cel1998,RenS2007}.  In particular we discuss recent work \cite{deKS2010} on exploiting group symmetry in SDP relaxations of QAP, which is useful for approximately computing elementary convex graph invariants in many interesting cases.

Finally in Section~\ref{sec:appscgi} we return to the motivating problems described previously, and give solutions to these questions. These solutions are based on convex programming formulations, with convex graph invariants playing a fundamental role.  We give theoretical conditions for the success of these convex formulations in solving the problems discussed above, and experimental demonstration for their effectiveness in practice.  Indeed the framework provided by convex graph invariants allows for a \emph{unified} investigation of our proposed solutions.  As an example result we give a tractable convex program (in fact an SDP) in Section \ref{subsec:dec} to ``deconvolve'' the cycle and the Clebsch graph from a composite graph consisting of these components (see Figure~\ref{fig:dec1}); a salient feature of this convex program is that it only uses \emph{spectral invariants} to perform the decomposition.

\paragraph{Summary of contributions}  We emphasize again the main contributions of this paper.  We begin by introducing three canonical problems involving structural properties of graphs.  These problems arise in various applications (see Section~\ref{sec:apps}), and serve as a motivation for our discussion in this paper.  In order to solve these problems we introduce convex graph invariants, and investigate their properties (see Section~\ref{sec:cgi}).  Specifically we provide a representation theorem of convex graph invariants in terms of elementary invariants, and we make connections between these ideas and concepts from other areas such as robust optimization.  Finally we describe tractable convex programming solutions to the motivating problems based on convex graph invariants (see Section~\ref{sec:appscgi}).  Therefore, convex graph invariants provide a useful computational framework based on convex optimization for graph problems.

\paragraph{Related previous work}  We note that convex optimization methods have been used previously to solve various graph-related problems.  We would particularly like to emphasize a body of work on convex programming formulations to optimize convex functions of the Laplacian eigenvalues of graphs \cite{BoyDX2004,Boy2006} subject to various constraints.  Although our objective is similar in that we seek solutions based on convex optimization to graph problems, our work is different in several respects from these previous approaches.  While the problems discussed in \cite{Boy2006} explicitly involved the optimization of spectral functions, other graph problems such as those described in Section~\ref{sec:apps} may require non-spectral approaches (for example, hypothesis testing between two families of graphs that are isospectral, i.e., have the same spectrum, but are distinguished by other structural properties).  As convex spectral invariants form a subset of convex graph invariants, the framework proposed in this paper offers a larger suite of convex programming methods for graph problems.  More broadly our work is the first to formally introduce and characterize convex graph invariants, and to investigate their properties as natural mathematical objects of independent interest.

\paragraph{Outline}  In Section~\ref{sec:apps} we give more details of the questions that motivate our study of convex graph invariants.  Section~\ref{sec:cgi} gives the definition of convex graph invariants and invariant convex sets, as well as several examples of these such functions and sets.  We also discuss various properties of convex graph invariants in this section.  In Section~\ref{sec:comp} we investigate the question of efficiently computing approximations to intractable convex graph invariants.  We give detailed solutions using convex graph invariants to each of our motivating problems in Section~\ref{sec:appscgi}, and we conclude with a brief discussion in Section~\ref{sec:conc}.



\section{Applications}
\label{sec:apps}

In this section we describe three problems involving structural properties of graphs, which serve as a motivation for our investigation of convex graph invariants.  In Section~\ref{sec:appscgi} we give solutions to these problems using convex graph invariants.

\subsection{Graph Deconvolution}
\label{subsec:appsdec}

Suppose we are given a graph that is formed by overlaying two graphs on the same set of nodes.  More formally we have a graph whose adjacency matrix is formed by adding the adjacency matrices of two known graphs.  However, we do not have any information about the relative labeling of the nodes in the two component graphs.  Can we recover the individual components from the composite graph?  As an example suppose we are given the combination of a cycle and a grid, or a cycle and the Clebsch graph, on the same set of nodes.  Without any additional information about the labeling of the nodes, which may reveal the cycle/grid or cycle/Clebsch graph structure, the goal is to recover the individual components. Figure~\ref{fig:dec1} gives a graphical illustration of this question.  In general such decomposition problems may be ill-posed, and it is of interest to give conditions under which unique deconvolution is possible as well as to provide tractable computational methods to recover the individual components.  In Section~\ref{subsec:dec} we describe an approach based on convex optimization for graph deconvolution;  for example this method decomposes the cycle and the Clebsch graph from a composite graph consisting of these components (see Figure~\ref{fig:dec1}) using only the spectral properties of the two graphs.

Well-known problems that have the flavor of graph deconvolution include the \emph{planted clique} problem, which involves identifying hidden cliques embedded inside a larger graph, and the \emph{clustering} problem in which the goal is to decompose a large graph into smaller densely connected clusters by removing just a few edges.  Convex optimization approaches for solving such problems have been proposed recently \cite{AmeV2009,AmeV2010}.  Graph deconvolution more generally may include other kinds of embedded structures beyond cliques.

Applications of graph deconvolution arise in network analysis in which one seeks to better understand a complex network by decomposing it into simpler components.  Graphs play an important role in modeling, for example, biological networks \cite{MasV2007} and social networks \cite{Jac2008,EasK2010}, and lead to natural graph deconvolution problems in these areas.  For instance graphs are useful for describing social exchange networks of interactions of multiple agents, and graph decompositions are useful for describing the structure of optimal bargaining solutions in such networks \cite{KleT2008}.  In a biological network setting, transcriptional regulatory networks of bacteria have been observed to consist of small subgraphs with specific structure (called motifs) that are connected together using a ``backbone'' \cite{DobBBO2004}.  Decomposing such regulatory networks into the component structures is useful for obtaining a better understanding of the high-level properties of the composite network.

\begin{figure}
\begin{center}
\epsfig{file=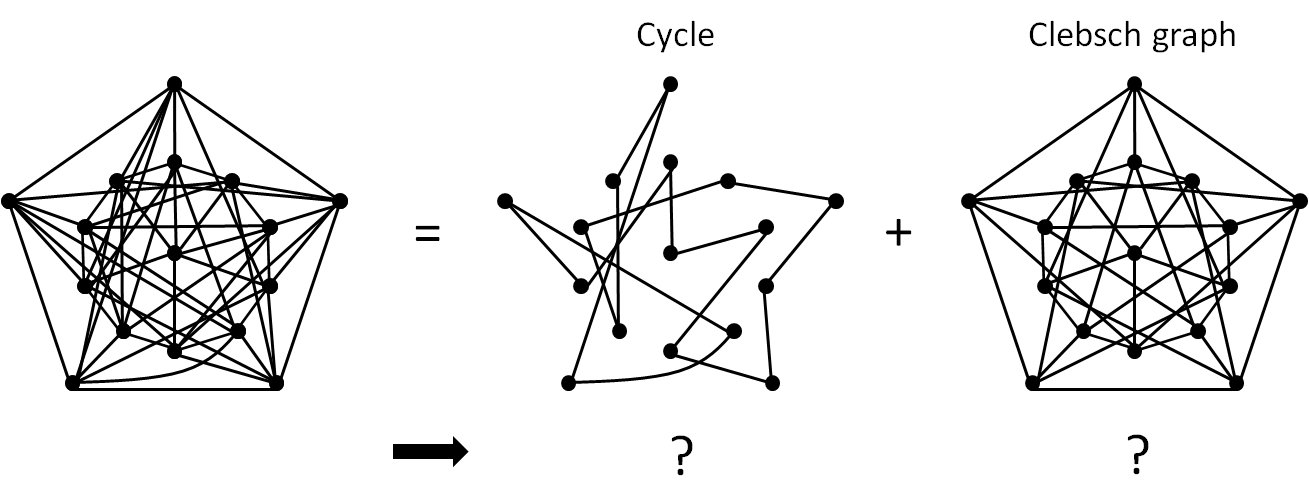,width=12cm,height=5cm} \caption{An instance of a deconvolution problem: Given a composite graph formed by adding the $16$-cycle and the Clebsch graph, we wish to recover the individual components.  The Clebsch graph is an example of a strongly regular graph on $16$ nodes \cite{GodR2004}; see Section~\ref{subsec:dec} for more details about the properties of such graphs.} \label{fig:dec1}
\end{center}
\end{figure}

\subsection{Generating Graphs with Desired Structural Properties}
\label{subsec:appsid}

Suppose we wish to construct a graph with certain prescribed structural constraints.  A very simple example may be the problem of constructing a graph in which each node has degree equal to two.  A graph given by a single cycle satisfies this constraint.  A less trivial problem is one in which the objective may be to build a connected graph with constraints on the spectrum of the adjacency matrix, the degree distribution, and the additional requirements that the graph be triangle-free and square-free.  Of course such graph reconstruction problems may be infeasible in general, as there may be no graph consistent with the given constraints. Therefore it is of interest to derive suitable conditions under which this problem may be well-posed, and to develop a suitably flexible yet tractable computational framework to incorporate any structural information available about a graph.

A prominent instance of a graph construction problem that has received much attention is the question of generating expander graphs \cite{HooLW2006}.  Expanders are, roughly speaking, sparse graphs that are well-connected, and they have found applications in numerous areas of computer science.  Methods used to construct expanders range from random sampling approaches to deterministic constructions based on Ramanujan graphs \cite{HooLW2006}.  In Section~\ref{subsec:id} we describe an approach based on convex optimization to generate sparse, weighted graphs with small degree and large spectral gap.

\begin{figure}
\begin{center}
\epsfig{file=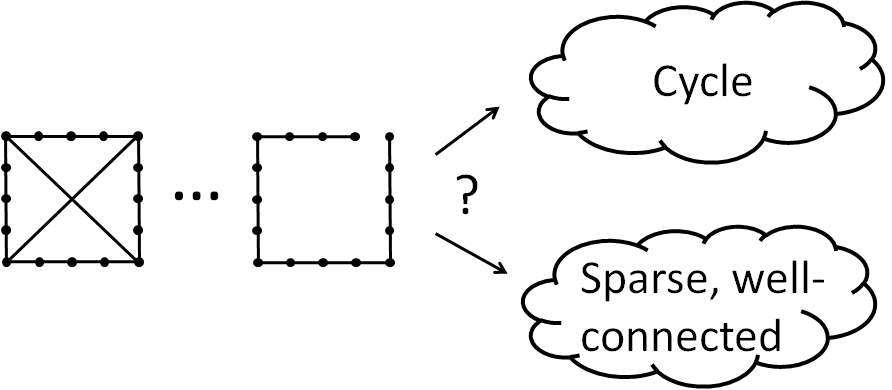,width=10cm,height=5cm} \caption{An instance of a hypothesis testing problem: We wish to decide which family of graphs offers a ``better explanation'' for a given candidate sample graph.} \label{fig:ht}
\end{center}
\end{figure}

\subsection{Graph Hypothesis Testing}
\label{subsec:appsht}

As our third problem we consider a more statistically motivated question.  Suppose we have two families of graphs each characterized by some common structural properties specified by certain invariants.  Given a new sample graph which of these two families offers a ``better explanation'' for the sample graph?  For example as illustrated in Figure~\ref{fig:ht} we may have two families of graphs -- one being the collection of cycles, and the other being the set of sparse, well-connected graphs.  If a new sample graph is a path (i.e., a cycle with an edge removed), we would expect that the family of cycles should be a better explanation.  On the other hand if the sample is a cycle plus some edges connecting diametrically opposite nodes, then the second family of sparse, well-connected graphs offers a more plausible fit.  Notice that these classes of graphs may often be specified in terms of \emph{different} sets of invariants, and it is of interest to develop a suitable framework in which we can incorporate diverse structural information provided about graph families.

We differentiate this problem from the well-studied question of \emph{testing properties} of graphs \cite{GolGR1996}.  Examples of property testing include testing whether a graph is $3$-colorable, or whether it is close to being bipartite.  An important goal in property testing is that one wishes to test for graph properties by only making a small number of ``queries'' of a graph.  We do not explicitly seek such an objective in our algorithms for hypothesis testing.  We also note that hypothesis testing can be posed more generally than a yes/no question as in property testing, and as mentioned above the two families in hypothesis testing may be specified in terms of very different sets of invariants.

In order to address the hypothesis testing question in a statistical framework, we would need a statistical theory for graphs and appropriate error metrics with respect to which one could devise optimal decision rules.  In Section~\ref{subsec:ht} we discuss a computational approach to the hypothesis testing problem using convex graph invariants that gives good empirical performance, and we defer the issue of developing a formal statistical framework to future work.


\section{Convex Graph Invariants}
\label{sec:cgi}

In this section we define convex graph invariants, and discuss their properties.  Throughout this paper we denote the space of $n \times n$ symmetric matrices by $\Sn \simeq \mathbb{R}^{n+1 \choose 2}$.  All our definitions of convexity are with respect to the space $\Sn$.  We consider undirected graphs that do not have multiple edges and no self-loops; these are represented by adjacency matrices that lie in $\Sn$.  Therefore a graph may possibly have node weights and edge weights.  A graph is said to be \emph{unweighted} if its node weights are zero, and if each edge has a weight of one (non-edges have a weight of zero); otherwise a graph is said to be \emph{weighted}.  Let $\e_i \in \R^n$ denote the vector with a one in the $i$'th entry and zero elsewhere, let $I$ denote the $n \times n$ identity matrix, let $\ones \in \R^n$ denote the all-ones vector, and let $J = \ones \ones^T \in \Sn$ denote the all-ones matrix.  Further we let $\A = \{A ~:~ A \in \Sn, ~ 0 \leq A_{i,j} \leq 1 ~ \forall i,j\}$; we will sometimes find it useful in our examples in Section~\ref{subsec:exics} to restrict our attention to graphs with adjacency matrices in $\A$.  Next let $\mathrm{Sym}(n)$ denote the symmetric group over $n$ elements, i.e., the group of permutations of $n$ elements.  Elements of this group are represented by $n \times n$ permutation matrices.  Let $\mathrm{O}(n)$ represent the orthogonal group of $n \times n$ orthogonal matrices.  Finally given a vector $\bx \in \R^n$ we let $\overline{\bx}$ denote the vector obtained by sorting the entries of $\bx$ in descending order.

\subsection{Motivation: Graphs and Adjacency Matrices}
\label{subsec:mot}

Matrix representations of graphs in terms of adjacency matrices and Laplacians have been used widely both in applications as well as in the analysis of the structure of graphs based on algebraic properties of these matrices \cite{Big1994}.  For example the spectrum of the Laplacian of a graph reveals whether a graph is ``diffusive'' \cite{HooLW2006}, or whether it is even connected.  The degree sequence, which may be obtained from the adjacency matrix or the Laplacian, reveals whether a graph is regular, and it plays a role in a number of real-world investigations of graphs arising in social networks and the Internet.

Given a graph $\mathcal{G}$ defined on $n$ nodes, a \emph{labeling} of the nodes of $\mathcal{G}$ is a function $\ell$ that maps the nodes of $\mathcal{G}$ onto distinct integers in $\{1,\dots,n\}$.  An adjacency matrix $A \in \Sn$ is then said to \emph{represent} or \emph{specify} $\mathcal{G}$ if there exists a labeling $\ell$ of the nodes of $\mathcal{G}$ so that the weight of the edge between nodes $i$ and $j$ equals $A_{\ell(i) \ell(j)}$ for all pairs $\{i,j\}$ and the weight of node $i$ equals $A_{\ell(i) \ell(i)}$ for all $i$.  However an adjacency matrix representation $A$ of the graph $\mathcal{G}$ is not unique.  In particular $\Pi A \Pi^T$ also specifies $\mathcal{G}$ for all $\Pi \in \mathrm{Sym}(n)$.  All these alternative adjacency matrices correspond to different labelings of the nodes of $\mathcal{G}$. \emph{Thus the graph $\mathcal{G}$ is specified by the matrix $A$ only up to a relabeling of the indices of $A$.}  Our objective is to describe abstract structural properties of $\mathcal{G}$ that do \emph{not} depend on a choice of labeling of the nodes.  In order to characterize such \emph{unlabeled} graphs in which the nodes have no distinct identity except through their connections to other nodes, it is important that any function of an adjacency matrix representation of a graph not depend on the particular choice of indices of $A$.  Therefore we seek functions of adjacency matrices that are invariant under conjugation by permutation matrices, and denote such functions as \emph{graph invariants}.

\subsection{Definition of Convex Invariants}
\label{subsec:cgidef}

A convex graph invariant is an invariant that is a convex function of the adjacency matrix of a graph.  Specifically we have the following definition:

\begin{definition}
A function $f: \Sn \rightarrow \R$ is a \emph{convex graph invariant} if it is convex, and if for any $A \in \Sn$ it holds that $f(\Pi A \Pi^T) = f(A)$ for all permutation matrices $\Pi \in \mathrm{Sym}(n)$.
\end{definition}

Thus convex graph invariants are convex functions that are \emph{constant over orbits} of the symmetric group acting on symmetric matrices by conjugation.  As described above the motivation behind the invariance property is clear. The motivation behind the convexity property is that we wish to construct solutions based on convex programming formulations in order to solve problems such as those listed in Section~\ref{sec:apps}.  We present several examples of convex graph invariants in Section~\ref{subsec:excgi}.  We note that a \emph{concave graph invariant} is a real-valued function over $\Sn$ that is the negative of a convex graph invariant.

We also consider invariant convex sets, which are defined in an analogous manner to convex graph invariants:
\begin{definition}
A set $C \subseteq \Sn$ is said to be an \emph{invariant convex set} if it is convex and if for any $A \in C$ it is the case that $\Pi A \Pi^T \in C$ for all permutation matrices $\Pi \in \mathrm{Sym}(n)$.
\end{definition}

In Section~\ref{subsec:exics} we present examples in which graphs can be constrained to have various properties by requiring that adjacency matrices belong to such convex invariant sets.  We also make connections between robust optimization and invariant convex sets in Section~\ref{subsec:rob}.

In order to systematically study convex graph invariants, we analyze certain elementary invariants that serve as a basis for constructing arbitrary convex invariants.  These elementary invariants are defined as follows:
\begin{definition}
An \emph{elementary convex graph invariant} is a function $\Theta_P: \Sn \rightarrow \R$ of the form
\begin{equation*}
\Theta_P(A) = \max_{\Pi \in \mathrm{Sym}(n)} \tr(P \Pi A \Pi^T),
\end{equation*}
for any $P \in \Sn$.
\end{definition}

It is clear that an elementary invariant is also a convex graph invariant, as it is expressed as the maximum over a set of convex functions (in fact linear functions).  We describe various properties of convex graph invariants in Sections~\ref{subsec:rep}.  One useful construction that we give is the expression of arbitrary convex graph invariants as suprema over elementary invariants.  We also discuss convex spectral invariants in Section~\ref{subsec:spec}, which are convex functions of a symmetric matrix that depend purely on its spectrum.  Finally an important point is that convex graph invariants may in general be hard to compute.  In Section~\ref{sec:comp} we discuss this problem and propose further tractable convex relaxations for cases in which a convex graph invariant may be intractable to compute.

In the Appendix we describe convex functions defined on $\R^n$ that are invariant with respect to any permutation of the argument.  Such functions have been analyzed previously, and we provide a list of their well-known properties.  We contrast these properties with those of convex graph invariants throughout the rest of this section.

\subsection{Examples of Convex Graph Invariants}
\label{subsec:excgi}

We list several examples of convex graph invariants.  As mentioned previously some of these invariants may possibly be difficult to compute, but we defer discussion of computational issues to Section~\ref{sec:comp}.  A useful property that we exploit in several of these examples is that a function defined as the supremum over a set of convex functions is itself convex \cite{Roc1996}.

\textbf{Number of edges.}  The total number of edges (or sum of edge weights) is an elementary convex graph invariant with $P = \tfrac{1}{2}(\ones \ones^T - I)$.

\textbf{Node weight.}  The maximum node weight of a graph, which corresponds to the maximum diagonal entry of the adjacency matrix of the graph, is an elementary convex graph invariant with $P = \e_1 \e_1^T$.  The maximum diagonal entry in \emph{magnitude} of an adjacency matrix is a convex graph invariant, and can be expressed as follows with $P = \e_1 \e_1^T$:
\begin{equation*}
\mathrm{max. ~ node ~ weight}(A) = \max\{\Theta_P(A), \Theta_{-P}(A)\}.
\end{equation*}
Similarly the sum of all the node weights, which is the sum of the diagonal entries of an adjacency matrix of a graph, can be expressed as an elementary convex graph invariant with $P$ being the identity matrix.

\textbf{Maximum degree.}  The maximum (weighted) degree of a node of a graph is also an elementary convex graph invariant with $P_{1,i} = P_{i,1} = 1, ~ \forall i \neq 1$, and all the other entries of $P$ set to zero.

\textbf{Largest cut.}  The value of the largest weighted cut of a graph specified by an adjacency matrix $A \in \Sn$ can be written as follows:
\begin{equation*}
\mathrm{max. ~ cut}(A) = \max_{\by \in \{-1,+1\}^n}~ \frac{1}{4}\sum_{i,j} A_{i,j} (1-\by_i \by_j).
\end{equation*}
As this function is a maximum over a set of linear functions, it is a convex function of $A$.  Further it is also clear that $\mathrm{max. ~cut}(A) = \mathrm{max. ~cut}(\Pi A \Pi^T)$ for all permutation matrices $\Pi$.  Consequently the value of the largest cut of a graph is a convex graph invariant.  We note here that computing this invariant is intractable in general.  In practice one could instead employ the following well-known tractable SDP relaxation \cite{GoeW1995}, which is related to the MAXCUT value by an appropriate shift and rescaling:
\begin{equation}
\begin{aligned}
f(A) = \min_{X \in \Sn} & ~~~ \tr(X A) \\ \mbox{s.t.} & ~~~ X_{ii} = 1, ~ \forall i \\ & ~~~ X \succeq 0.
\end{aligned}
\label{eq:maxcut1}
\end{equation}
As this relaxation is expressed as the minimum over a set of linear functions, it is a concave graph invariant.  In Section~\ref{subsec:gencomp} we discuss in greater detail tractable relaxations for invariants that are difficult to compute.

\textbf{Isoperimetric number (Cheeger constant).}  The isoperimetric number, also known as the Cheeger constant \cite{Dod1984}, of a graph specified by adjacency matrix $A \in \Sn$ is defined as follows:
\begin{equation*}
\mathrm{isoperimetric ~ number}(A) = \min_{U \subset \{1,\dots,n\}, |U| \leq \tfrac{n}{2}, \by \in \R^n, \by_U = 1, \by_{U^c} = -1} ~~ \sum_{i,j}~ \frac{A_{i,j} (1-\by_i \by_j)}{4 |U|}.
\end{equation*}
Here $U^c = \{1,\dots,n\} \backslash U$ denotes the complement of the set $U$, and $\by_U$ is the subset of the entries of the vector $\by$ indexed by $U$.  As with the last example, it is again clear that this function is a concave graph invariant as it is expressed as the minimum over a set of linear functions.  In particular it can be viewed as measuring the value of a ``normalized'' cut, and plays an important role in several aspects of graph theory \cite{HooLW2006}.

\textbf{Degree sequence invariants.}  Given a graph specified by adjacency matrix $A$ (assume for simplicity that the node weights are zero), the weighted \emph{degree sequence} is given by the vector $\bd(A) = \overline{A \ones}$, i.e., the vector obtained by sorting the entries of $A \ones$ in descending order.  It is easily seen that $\bd(A)$ is a graph invariant.  Consequently any function of $\bd(A)$ is also a graph invariant.  However our interest is in obtaining \emph{convex} functions of the adjacency matrix $A$.  An important class of functions of $\bd(A)$ that are convex functions of $A$, and therefore are convex graph invariants, are of the form:
\begin{equation*}
f(A) = \mathbf{v}^T \bd(A),
\end{equation*}
for $\mathbf{v} \in \R^n$ such that $\mathbf{v}_1 \geq \cdots \geq \mathbf{v}_n$.  This function can also be expressed as the maximum over all permutations $\Pi \in \mathrm{Sym}(n)$ of the inner-product $\mathbf{v}^T \Pi A \ones$.  As described in the Appendix such linear \emph{monotone} functionals can be used to express \emph{all} convex functions over $\R^n$ that are invariant with respect to permutations of the argument.  Consequently these monotone functions serve as building blocks for constructing all convex graph invariants that are functions of $\bd(A)$.

\textbf{Spectral invariants.}  Let the eigenvalues of the adjacency matrix $A$ of a graph be denoted as $\lambda_1(A) \geq \cdots \geq \lambda_n(A)$, and let $\lambda(A) = [\lambda_1(A), \dots, \lambda_n(A)]$.  These eigenvalues form the \emph{spectrum} of the graph specified by $A$, and clearly remain unchanged under transformations of the form $A \rightarrow V A V^T$ for any orthogonal matrix $V \in \mathrm{O}(n)$ (and therefore for any permutation matrix).  Hence any function of the spectrum of a graph is a graph invariant.  Analogous to the previous example, an important class of spectral functions that are also \emph{convex} are of the form:
\begin{equation*}
f(A) = \mathbf{v}^T \lambda(A),
\end{equation*}
for $\mathbf{v} \in \R^n$ such that $\mathbf{v}_1 \geq \cdots \geq \mathbf{v}_n$.  We denote spectral invariants that are also convex functions as \emph{convex spectral invariants}.  As with convex invariants of the degree sequence, all convex spectral invariants can be constructed using monotone functions of the type described here (see the Appendix).

\textbf{Second-smallest eigenvalue of Laplacian.}  This example is only meaningful for weighted graphs in which the node and edge weights are non-negative. For such a graph specified by adjacency matrix $A$, let $D_A = \mathrm{diag}(A \ones)$, where $\mathrm{diag}$ takes as input a vector and forms a diagonal matrix with the entries of the vector on the diagonal.  The \emph{Laplacian} of a graph is then defined as follows:
\begin{equation*}
L_A = D_A - A.
\end{equation*}
If $A \in \Sn$ consists of nonnegative entries, then $L_A \succeq 0$.  In this setting we denote the eigenvalues of $L_A$ as $\lambda_1(L_A) \geq \cdots \geq \lambda_n(L_A)$.  It is easily seen that $\lambda_n(L_A) = 0$ as the all-ones vector $\ones$ lies in the kernel of $L_A$.  The second-smallest eigenvalue $\lambda_{n-1}(L_A)$ of the Laplacian is a \emph{concave} invariant function of $A$.  It plays an important role as the graph specified by $A$ is connected if and only if $\lambda_{n-1}(L_A) > 0$.

\textbf{Inverse of Stability Number.}  A stable set of an unweighted graph $\mathcal{G}$ is a subset of the nodes of $\mathcal{G}$ such that no two nodes in the subset are adjacent.  The stability number is the size of the largest stable set of $\mathcal{G}$, and is denoted by $\alpha(\mathcal{G})$.  By a result of Motzkin and Straus \cite{MotS1965}, the inverse of the stability number can be written as follows:
\begin{equation}
\begin{aligned}
\frac{1}{\alpha(\mathcal{G})} = \min_{\bx} & ~~~ \bx^T (I+A) \bx \\ \mbox{s.t.} & ~~~ \bx_i \geq 0, ~ \forall i, ~~~ \sum_i \bx_i = 1.
\end{aligned}
\label{eq:mssn}
\end{equation}
Here $A$ is any adjacency matrix representing the graph $\mathcal{G}$.  Although this formulation is for unweighted graphs with edge weights being either one or zero, we note that the definition can in fact be \emph{extended} to all weighted graphs, i.e., for graphs with adjacency matrix given by \emph{any} $A \in \Sn$.  Consequently, the inverse of this extended stability number of a graph is a concave graph invariant over $\Sn$ as it is expressed as the minimum over a set of linear functions.  As this function is difficult to compute in general (because the stability number of a graph is intractable to compute), one could employ the following tractable relaxation:
\begin{equation}
\begin{aligned}
f(A) = \min_{X \in \Sn} & ~~~ \tr(X (I+A)) \\ \mbox{s.t.} & ~~~ X \geq 0, ~~~ X \succeq 0, ~~~ \ones^T X \ones = 1.
\end{aligned}
\label{eq:mssnrelax}
\end{equation}
This relaxation is also a concave graph invariant as it is expressed as the minimum over a set of affine functions.

\subsection{Examples of Invariant Convex Sets}
\label{subsec:exics}

Next we provide examples of invariant convex sets.  As described below constraints expressed using such sets are useful in order to require that graphs have certain properties.  Note that a sublevel set $\{A : f(A) \leq \alpha\}$ for any convex graph invariant $f$ is an invariant convex set.  Therefore all the examples of convex graph invariants given above can be used to construct invariant convex set constraints.

\textbf{Algebraic connectivity and diffusion.}  As mentioned in Section~\ref{subsec:excgi} a graph represented by adjacency matrix $A \in \A$ has the property that the second-smallest eigenvalue $\lambda_{n-1}(L_A)$ of the Laplacian of the graph is a concave graph invariant.  The constraint set $\{A : A \in \A, ~ \lambda_{n-1}(L_A) \geq \epsilon\}$ for any $\epsilon > 0$ expresses the property that a graph must be \emph{connected}.  Further if we set $\epsilon$ to be relatively large, we can require that a graph has good diffusion properties.

\textbf{Largest clique constraint.}  Let $K_k \in \Sn$ denote the adjacency matrix of an unweighted $k$-clique.  Note that $K_k$ is only nonzero within a $k \times k$ submatrix, and is zero-padded to lie in $\Sn$.  Consider the following invariant convex set for $\epsilon > 0$:
\begin{equation*}
\{ A : A \in \A, ~ \Theta_{K_k}(A) \leq (k^2 - k) - \epsilon\}.
\end{equation*}
This constraint set expresses the property that a graph cannot have a clique of size $k$ (or larger), with the edge weights of all edges in the clique being close to one.  For example we can use this constraint set to require that a graph has no triangles (with large edge weights).  It is important to note that triangles (and cliques more generally) are forbidden only with the qualification that all the edge weights in the triangle cannot be close to one.  For example a graph may contain a triangle with each edge having weight equal to $\tfrac{1}{2}$.  In this case the function $\Theta_{K_3}$ evaluates to $3$, which is much smaller than the maximum value of $6$ that $\Theta_{K_3}$ can take for matrices in $\A$ that contain a triangle with edge weights equal to one.

\textbf{Girth constraint.}  The girth of a graph is the length of the shortest cycle.  Let $C_k \in \Sn$ denote the adjacency matrix of an unweighted $k$-cycle for $k \leq n$.  As with the $k$-clique note that $C_k$ is nonzero only within a $k \times k$ submatrix, and is zero-padded so that it lies in $\Sn$.  In order to express the property that a graph has no small cycles, consider the following invariant convex set for $\epsilon > 0$:
\begin{equation*}
\{ A : A \in \A, ~ \Theta_{C_k}(A) \leq 2 k - \epsilon ~ \forall k \leq k_0\}.
\end{equation*}
Graphs belonging to this set cannot have cycles of length less than or equal to $k_0$, with the weights of edges in the cycle being close to one.  Thus we can impose a lower bound on a weighted version of the girth of a graph.

\textbf{Forbidden subgraph constraint.}  The previous two examples can be viewed as special cases of a more general constraint involving forbidden subgraphs.  Specifically let $A_k$ denote the adjacency matrix of an unweighted graph on $k$ nodes that consists of $E_k$ edges.  As before $A_k$ is zero-padded to ensure that it lies in $\Sn$.  Consider the following invariant convex set for $\epsilon > 0$:
\begin{equation*}
\{ A : A \in \A, ~ \Theta_{A_k}(A) \leq 2 E_k - \epsilon\}.
\end{equation*}
This constraint set requires that a graph not contain the subgraph given by the adjacency matrix $A_k$, with edge weights close to one.

\textbf{Degree distribution.}  Using the notation described previously, let $\bd(A) = \overline{A \ones}$ denote the sorted degree sequence ($\bd(A)_1 \geq \cdots \geq \bd(A)_n$) of a graph specified by adjacency matrix $A$.  We wish to consider the set of all graphs that have degree sequence $\bd(A)$.  This set is in general not convex unless $A$ represents a (weighted) regular graph, i.e., $\bd(A) = \alpha \ones$ for some constant $\alpha$.  Therefore we consider the \emph{convex hull} of all graphs that have degree sequence given by $\bd$:
\begin{equation*}
\mathcal{D}(A) = \mathrm{conv}\{B : B \in \Sn, ~ \overline{B \ones} = \bd(A)\}.
\end{equation*}
This set is in fact tractable to represent, and is given by the set of graphs whose degree sequence is \emph{majorized} by $\bd$:
\begin{equation*}
\mathcal{D}(A) = \left\{B : B \in \Sn, ~ \ones^T B \ones = \ones^T \bd(A), ~ \sum_{i=1}^k (\overline{B \ones})_i \leq \sum_{i=1}^k \bd(A)_i ~ \forall k = 1,\dots,n-1 \right\}.
\end{equation*}
By the majorization principle \cite{BenN2001} another representation for this convex set is as the set of graphs whose degree sequence lies in the \emph{permutahedron} generated by $\bd$ \cite{Zie1995}; the permutahedron generated by a vector is the convex hull of all permutations of the vector.  The notion of majorization is sometimes also referred to as \emph{Lorenz dominance} (see the Appendix for more details).

\textbf{Spectral distribution.}  Let $\lambda(A)$ denote the spectrum of a graph represented by adjacency matrix $A$.  As before we are interested in the set of all graphs that have spectrum $\lambda(A)$.  This set is nonconvex in general, unless $A$ is a multiple of the identity matrix in which case all the eigenvalues are the same.  Therefore we consider the convex hull of all graphs (i.e., symmetric adjacency matrices) that have spectrum equal to $\lambda(A)$:
\begin{equation*}
\mathcal{E}(A) = \mathrm{conv}\{B : B \in \Sn, ~ \lambda(B) = \lambda(A)\}.
\end{equation*}
This convex hull also has a tractable semidefinite representation analogous to the description above \cite{BenN2001}:
\begin{equation*}
\mathcal{E}(A)= \left\{B : B \in \Sn, ~ \tr(B) = \tr(A), ~ \sum_{i=1}^k \lambda(B)_i \leq \sum_{i=1}^k \lambda(A)_i ~ \forall k = 1,\dots,n-1 \right\}.
\end{equation*}
Note that eigenvalues are specified in descending order, so that $\sum_{i=1}^k \lambda(B)_i$ represents the sum of the $k$-largest eigenvalues of $B$.

\subsection{Representation of Convex Graph Invariants}
\label{subsec:rep}

All invariant convex sets and convex graph invariants can be represented using elementary convex graph invariants.  In this section we describe both these representation results.  Representation theorems in mathematics give expressions of complicated sets or functions in terms of simpler, basic objects.  In functional analysis the Riesz representation theorem relates elements in a Hilbert space and its dual, by uniquely associating each element of the Hilbert space to a linear functional \cite{Rud1966}.  In probability theory de Finetti's theorem states that a collection of exchangeable random variables can be expressed as a mixture of independent, identically distributed random variables.  In convex analysis every closed convex set can be expressed as the intersection of halfspaces \cite{Roc1996}. In each of these cases representation theorems provide a powerful analysis tool as they give a \emph{canonical} expression for complicated mathematical objects in terms of elementary sets/functions.

First we give a representation result for convex graph invariants.  In order to get a flavor of this result consider the maximum absolute-value node weight invariant of Section~\ref{subsec:excgi}, which is represented as the supremum over two elementary convex graph invariants.  The following theorem states that in fact any convex graph invariant can be expressed as a supremum over elementary invariants:

\begin{theorem} \label{theo:cgirep}
Let $f$ be any convex graph invariant.  Then $f$ can be expressed as follows:
\begin{equation*}
f(A) = \sup_{P \in \mathcal{P}} ~ \Theta_P(A) - \alpha_P,
\end{equation*}
for $\alpha_P \in \R$ and for some subset $\mathcal{P} \subset \Sn$.
\end{theorem}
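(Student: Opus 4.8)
The plan is to derive the representation from the Fenchel--Moreau biconjugate theorem, after observing that the Fenchel conjugate of a convex graph invariant is again invariant. Since $f : \Sn \to \R$ is a finite-valued convex function on the finite-dimensional space $\Sn$, it is continuous, hence closed and proper, so $f = f^{**}$; that is, writing $\langle P, B \rangle = \tr(PB)$ and $f^*(P) = \sup_{B \in \Sn}\left( \langle P, B \rangle - f(B) \right)$, we have $f(A) = \sup_{P \in \Sn}\left( \langle P, A \rangle - f^*(P) \right)$ with $f^*$ proper convex (possibly $+\infty$-valued).

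The key step is to show that $f^*$ inherits the invariance of $f$. For any $\Pi \in \mathrm{Sym}(n)$, using the cyclic property of the trace and then the substitution $B \mapsto \Pi B \Pi^T$ together with $f(\Pi B \Pi^T) = f(B)$,
\[
f^*(\Pi P \Pi^T) = \sup_{B}\left( \tr(P \Pi^T B \Pi) - f(B) \right) = \sup_{B}\left( \langle P, B \rangle - f(\Pi B \Pi^T) \right) = f^*(P).
\]
Next I would use this invariance to symmetrize the biconjugate formula. Since $\Theta_P(A) = \max_{\Pi} \tr(P \Pi A \Pi^T) = \max_{\Pi} \tr(P \Pi^T A \Pi) \geq \langle P, A \rangle$ (take $\Pi = I$), one direction is immediate: $\langle P, A \rangle - f^*(P) \leq \Theta_P(A) - f^*(P)$ for every $P$. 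Conversely, if $\Pi^*$ attains the maximum defining $\Theta_P(A)$, then $\Theta_P(A) - f^*(P) = \langle \Pi^* P \Pi^{*T}, A \rangle - f^*(\Pi^* P \Pi^{*T})$ by the trace identity and the invariance of $f^*$, so each term $\Theta_P(A) - f^*(P)$ is itself of the form $\langle P', A \rangle - f^*(P')$. Taking suprema gives $\sup_{P}\left( \langle P, A \rangle - f^*(P) \right) = \sup_{P}\left( \Theta_P(A) - f^*(P) \right)$, which equals $f(A)$. Finally, set $\mathcal{P} = \mathrm{dom}(f^*) = \{P \in \Sn : f^*(P) < \infty\}$ (nonempty and with $-\infty < f^*$ everywhere, since $f^*$ is proper) and $\alpha_P = f^*(P)$, yielding $f(A) = \sup_{P \in \mathcal{P}}\left( \Theta_P(A) - \alpha_P \right)$ as claimed.

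I expect the only delicate points to be the bookkeeping in the symmetrization identity (the two inequalities between $\sup_P(\langle P, A\rangle - f^*(P))$ and $\sup_P(\Theta_P(A) - f^*(P))$) and verifying the regularity hypotheses that license Fenchel--Moreau, namely that finiteness of $f$ on all of $\Sn$ forces closedness; both are routine given \cite{Roc1996}. As an alternative that avoids conjugates entirely, one can instead invoke the supporting-hyperplane representation $f = \sup\{\langle P, \cdot\rangle - \alpha : \langle P, \cdot \rangle - \alpha \leq f\}$, observe that invariance of $f$ upgrades each affine minorant to $\Theta_P - \alpha \leq f$ by applying $\max_\Pi$ to both sides, and then note that the upgraded family still dominates every affine minorant (again since $\Theta_P \geq \langle P, \cdot\rangle$), so its supremum is squeezed between $f$ and $f$; here $\mathcal{P} = \{P : \sup_B(\Theta_P(B) - f(B)) < \infty\}$ and $\alpha_P = \sup_B(\Theta_P(B) - f(B))$. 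I would present the conjugate-function version as the cleaner route.
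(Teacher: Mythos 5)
Your proposal is correct and is essentially the paper's argument: the paper also writes $f$ as a supremum of affine minorants $\tr(PA)-\alpha_P$ via the separation theorem, upgrades each to $\Theta_P(A)-\alpha_P$ using invariance of $f$, and closes the loop with $\Theta_P(A)\ge\tr(PA)$ — which is exactly your ``alternative'' route, while your main route merely packages the same idea through the canonical choice $\alpha_P=f^*(P)$ (the invariance of $f^*$ you verify playing the role of the invariance of $f$ in the paper's upper bound). The only cosmetic slip is writing $\langle \Pi^* P \Pi^{*T},A\rangle$ where the cyclic identity gives $\langle \Pi^{*T} P \Pi^{*},A\rangle$; since $\Pi^{*T}$ is again a permutation and $f^*$ is invariant under the whole group, nothing is affected.
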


\begin{proof}
Since $f$ is a convex function, it can be expressed as the supremum over linear functionals as follows:
\begin{equation*}
f(A) = \sup_{P \in \mathcal{P} \subseteq \Sn} ~ \tr(P A) - \alpha_P,
\end{equation*}
for $\alpha_P \in \R$.  This conclusion follows directly from the separation theorem in convex analysis \cite{Roc1996}; in particular this description of the convex function $f$ can be viewed as a specification in terms of supporting hyperplanes of the epigraph of $f$, which is a convex subset of $\Sn \times \R$.  However as $f$ is also a graph invariant, we have that $f(A) = f(\Pi A \Pi^T)$ for any permutation $\Pi$ and for all $A \in \Sn$.  Consequently for any permutation $\Pi$ and for any $P \in \mathcal{P}$,
\begin{equation*}
f(A) = f(\Pi A \Pi^T) \geq \tr(P \Pi A \Pi^T) - \alpha_P.
\end{equation*}
Thus we have that
\begin{equation}
f(A) \geq \sup_{P \in \mathcal{P}} ~ \Theta_P(A) - \alpha_P. \label{eq:cgirep1}
\end{equation}
However it also clear that for each $P \in \mathcal{P}$
\begin{equation*}
\Theta_P(A) - \alpha_P \geq \tr(P A) - \alpha_P,
\end{equation*}
which allows us to conclude that
\begin{equation}
\sup_{P \in \mathcal{P}} ~ \Theta_P(A) - \alpha_P \geq \sup_{P \in \mathcal{P}} ~ \tr(P A) - \alpha_P = f(A). \label{eq:cgirep2}
\end{equation}
Combining equations \eqref{eq:cgirep1} and \eqref{eq:cgirep2} we have the desired result.
\end{proof}

\begin{remark}
This result can be strengthened in the sense that one need only consider elements in $\mathcal{P}$ that lie in different equivalence classes up to conjugation by permutation matrices $\Pi \in \mathrm{Sym}(n)$.  In each equivalence class the representative functional is the one with the smallest value of $\alpha_P$.  This idea can be formalized as follows.  Consider the group action $\rho: (M,\Pi) \rightarrowtail \Pi M \Pi^T$ that conjugates elements in $\Sn$ by a permutation matrix in $\mathrm{Sym}(n)$.  With this notation we may restrict our attention in Theorem~\ref{theo:cgirep} to $\mathcal{P} \subset \Sn / \mathrm{Sym}(n)$, where $\Sn / \mathrm{Sym}(n)$ represents the quotient space under the group action $\rho$.  Such a mathematical object obtained by taking the quotient of a Euclidean space (or more generally a smooth manifold) under the action of a finite group is called an \emph{orbifold}.  With this strengthening one can show that there exists a \emph{unique, minimal} representation set $\mathcal{P} \subset \Sn / \mathrm{Sym}(n)$.  We however do not emphasize such refinements in subsequent results, and stick with the weaker statement that $\mathcal{P} \subseteq \Sn$ for notational and conceptual simplicity.
\end{remark}

As our next result we show that any invariant convex set can be represented as the intersection of sublevel sets of elementary convex graph invariants:

\begin{proposition} \label{theo:icsrep}
Let $\mathcal{S} \subseteq \Sn$ be an invariant convex set.  Then there exists a representation of $\mathcal{S}$ as follows:
\begin{equation*}
\mathcal{S} = \bigcap_{P \in \mathcal{P}} ~ \{A : A \in \Sn, ~ \Theta_P(A) \leq \alpha_P\},
\end{equation*}
for some $\mathcal{P} \subseteq \Sn$ and for $\alpha_P \in \R$.
\end{proposition}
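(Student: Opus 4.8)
The plan is to mimic the proof of Theorem~\ref{theo:cgirep}, but working with the set $\mathcal{S}$ directly via its supporting halfspaces rather than with a convex function via its epigraph. First I would invoke the separation theorem from convex analysis \cite{Roc1996}: since $\mathcal{S}$ is a closed convex set (I would note that if $\mathcal{S}$ is not closed one can pass to its closure, which is still invariant, or simply interpret the statement for closed invariant convex sets), it is the intersection of all the closed halfspaces containing it, i.e.
\begin{equation*}
\mathcal{S} = \bigcap_{P \in \mathcal{P}} \{A : A \in \Sn, ~ \tr(P A) \leq \alpha_P\},
\end{equation*}
for some index set $\mathcal{P} \subseteq \Sn$ and reals $\alpha_P$.

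Next I would exploit the invariance of $\mathcal{S}$ to upgrade each linear inequality $\tr(PA) \leq \alpha_P$ to the corresponding elementary-invariant inequality $\Theta_P(A) \leq \alpha_P$. The key observation is that for $A \in \mathcal{S}$ and any permutation $\Pi \in \mathrm{Sym}(n)$ we have $\Pi^T A \Pi \in \mathcal{S}$, hence $\tr(P \Pi^T A \Pi) \leq \alpha_P$; equivalently $\tr(\Pi P \Pi^T A) \leq \alpha_P$. Taking the maximum over all $\Pi$ gives $\Theta_P(A) = \max_\Pi \tr(P \Pi A \Pi^T) \leq \alpha_P$ (using that $\max_\Pi \tr(P\Pi A \Pi^T) = \max_\Pi \tr(\Pi P \Pi^T A)$, since ranging $\Pi$ over the group is the same as ranging over its inverses). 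Therefore $\mathcal{S} \subseteq \bigcap_{P \in \mathcal{P}} \{A : \Theta_P(A) \leq \alpha_P\}$. For the reverse inclusion, note that $\Theta_P(A) \geq \tr(PA)$ always (the identity permutation is one of the competitors in the max), so $\Theta_P(A) \leq \alpha_P$ implies $\tr(PA) \leq \alpha_P$; hence $\bigcap_{P \in \mathcal{P}} \{A : \Theta_P(A) \leq \alpha_P\} \subseteq \bigcap_{P \in \mathcal{P}} \{A : \tr(PA) \leq \alpha_P\} = \mathcal{S}$. Combining the two inclusions yields the claimed representation.

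I do not anticipate a serious obstacle here; the argument is essentially the set-valued analogue of Theorem~\ref{theo:cgirep} and uses only the separation theorem plus the two elementary sandwiching inequalities $\tr(PA) \le \Theta_P(A)$ and (on $\mathcal{S}$) $\Theta_P(A) \le \alpha_P$. The one point that warrants care is the closedness/properness hypothesis implicit in applying the separation theorem: a general convex set need not be an intersection of halfspaces, so I would either state the result for closed invariant convex sets or remark that replacing $\mathcal{S}$ by its closure is harmless since the closure of an invariant convex set is again invariant and convex (conjugation by a fixed permutation matrix is a linear homeomorphism of $\Sn$, so it commutes with taking closures). A secondary, purely cosmetic point, paralleling the remark after Theorem~\ref{theo:cgirep}, is that one may take $\mathcal{P}$ inside the orbifold $\Sn/\mathrm{Sym}(n)$ and pick the smallest admissible $\alpha_P$ in each orbit, but I would not belabor this and would keep the weaker statement $\mathcal{P} \subseteq \Sn$ for simplicity.
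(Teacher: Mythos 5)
Your proposal is correct and follows exactly the route the paper intends: the paper's own ``proof'' is a one-line remark that the argument is analogous to Theorem~\ref{theo:cgirep} and follows from the separation theorem, and your write-up is precisely that analogue carried out in detail (halfspace representation, upgrading $\tr(PA)\le\alpha_P$ to $\Theta_P(A)\le\alpha_P$ via invariance, and the reverse inclusion from $\tr(PA)\le\Theta_P(A)$). Your caveat about closedness is well taken --- the sublevel sets of $\Theta_P$ are closed, so the statement can only hold literally for closed $\mathcal{S}$, a point the paper glosses over.
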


\begin{proof}
The proof of this statement proceeds in an analogous manner to that of Theorem~\ref{theo:cgirep}, and is again essentially a consequence of the separation theorem in convex analysis.
\end{proof}

\subsection{A Robust Optimization View of Invariant Convex Sets}
\label{subsec:rob}

Uncertainty arises in many real-world problems.  An important goal in robust optimization (see \cite{BenEN2009} and the reference therein) is to translate formal notions of measures of uncertainty into convex constraint sets.  Convexity is important in order to obtain optimization formulations that are tractable.

The representation of a graph via an adjacency matrix in $\Sn$ is inherently uncertain as we have no information about the specific labeling of the nodes of the graph.  In this section we associate to each graph a convex polytope, which represents the best convex uncertainty set given a graph:

\begin{definition}
Let $\mathcal{G}$ be a graph that is represented by an adjacency matrix $A \in \Sn$ (any choice of representation is suitable).  The \emph{convex hull} of the graph $\mathcal{G}$ is defined as the following convex polytope:
\begin{equation*}
\mathcal{C}(\mathcal{G}) = \mathrm{conv}\{\Pi A \Pi^T : \Pi \in \mathrm{Sym}(n)\}.
\end{equation*}
\end{definition}

Recall that $\mathrm{Sym}(n)$ is the symmetric group of $n \times n$ permutation matrices.  One can check that the convex hull of a graph is an invariant convex set, and that its extreme points are the matrices $\Pi A \Pi^T$ for all $\Pi \in \mathrm{Sym}(n)$.  Note that this convex hull may in general be intractable to characterize; if these polytopes were tractable to characterize we would be able to solve the graph isomorphism problem in polynomial time.

The convex hull of a graph is the smallest convex set that contains all the adjacency matrices that represent the graph.  Therefore $\mathcal{C}(\mathcal{G})$ is in some sense the ``best convex characterization'' of the graph $\mathcal{G}$.  This notion is related to the concept of \emph{risk measures} studied in \cite{ArtDEH1999}, and the construction of convex uncertainty sets based on these risk measures studied in \cite{BerB2009}.  In particular we recall the following definition from \cite{BerB2009}:

\begin{definition}
Let $\mathcal{Z} = \{Z_1, \dots, Z_k\}$ be any finite collection of elements with $Z_i \in \Sn$.  Let $\bq \in \R^k$ be a probability distribution, i.e., $\sum_i \bq_i = 1$ and $\bq_i \geq 0, ~\forall i$.  Then the $\bq$-permutohull is the polytope in $\Sn$ defined as follows:
\begin{equation*}
\mathcal{B}_{\bq}(\mathcal{Z}) = \mathrm{conv}\left\{\sum_i (\Pi \bq)_i Z_i: \Pi \in \mathrm{Sym}(k) \right\}.
\end{equation*}
\end{definition}

Convex uncertainty sets given by permutohulls emphasize a data-driven view of robust optimization as adopted in \cite{BerB2009}.  Specifically the only information available about an uncertain set in many settings is a finite collection of data vectors $\mathcal{Z}$, and the probability distribution $\bq$ expresses preferences over such an unordered data set.  Therefore given a data set and a probability distribution that quantifies uncertainty with respect to elements of this data set, the $\bq$-permutohull is the smallest convex set expressing these uncertainty preferences.  We note that an important property of a permutohull is that it is invariant with respect to relabeling of the data vectors in $\mathcal{Z}$.

The convex hull of a graph $\mathcal{C}(\mathcal{G})$ is a simple example of a permutohull $\mathcal{B}_{\bq}(\mathcal{Z})$, with the distribution being $\bq = (1, 0, \dots, 0)$ and the set $\mathcal{Z} = \{\Pi A \Pi^T : \Pi \in \mathrm{Sym}(n)\}$ where $A \in \Sn$ represents the graph $\mathcal{G}$. More complicated permutohulls of graphs may be of interest in several applications but we do not pursue these generalizations here, and instead focus on the case of the convex hull of a graph as defined above.

The convex hull of a graph is itself an invariant convex set by definition.  Therefore we can appeal to Proposition~\ref{theo:icsrep} to give a representation of this set in terms of sublevel sets of elementary convex graph invariants.  As our next result we show that the values of all elementary convex graph invariants of $\mathcal{G}$ can be used to produce such a representation:

\begin{proposition} \label{theo:chrep}
Let $\mathcal{G}$ be a graph and let $A \in \Sn$ be an adjacency matrix representing $\mathcal{G}$.  We then have that
\begin{equation*}
\mathcal{C}(\mathcal{G}) = \bigcap_{P \in \Sn} ~ \{B : B \in \Sn, ~ \Theta_P(B) \leq \Theta_P(A) \}.
\end{equation*}
\end{proposition}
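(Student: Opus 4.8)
The plan is to prove the two inclusions separately; both are short, and the reverse inclusion is a direct application of the separation theorem, much in the spirit of the proof of Proposition~\ref{theo:icsrep}.

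For the inclusion $\mathcal{C}(\mathcal{G}) \subseteq \bigcap_{P \in \Sn} \{B : B \in \Sn, ~ \Theta_P(B) \leq \Theta_P(A)\}$, I would first note that each $\Theta_P$ is a convex graph invariant, so $\Theta_P(\Pi A \Pi^T) = \Theta_P(A)$ for every $\Pi \in \mathrm{Sym}(n)$. Since $\mathcal{C}(\mathcal{G})$ is by definition the convex hull of the points $\{\Pi A \Pi^T : \Pi \in \mathrm{Sym}(n)\}$, each of which is assigned the value $\Theta_P(A)$, convexity of $\Theta_P$ gives $\Theta_P(B) \leq \Theta_P(A)$ for every $B \in \mathcal{C}(\mathcal{G})$ and every $P \in \Sn$. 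This handles one direction with essentially no work.

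For the reverse inclusion I would argue by contraposition: suppose $B \in \Sn$ with $B \notin \mathcal{C}(\mathcal{G})$. Since $\mathcal{C}(\mathcal{G})$ is a compact convex set (a polytope, being the convex hull of finitely many matrices) not containing $B$, the separation theorem in convex analysis produces a linear functional on $\Sn$ strictly separating $B$ from $\mathcal{C}(\mathcal{G})$; identifying $\Sn$ with its dual via the trace inner product, this functional is $M \mapsto \tr(PM)$ for some $P \in \Sn$, and we may take $\tr(PB) > \sup_{C \in \mathcal{C}(\mathcal{G})} \tr(PC)$. Because a linear functional on a polytope attains its maximum at an extreme point, and the extreme points of $\mathcal{C}(\mathcal{G})$ are among the matrices $\Pi A \Pi^T$, this supremum equals $\max_{\Pi \in \mathrm{Sym}(n)} \tr(P \Pi A \Pi^T) = \Theta_P(A)$. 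Hence $\Theta_P(B) = \max_{\Pi \in \mathrm{Sym}(n)} \tr(P \Pi B \Pi^T) \geq \tr(PB) > \Theta_P(A)$, so $B$ violates the constraint indexed by this $P$ and therefore lies outside the intersection. Combining the two inclusions yields the claim.

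There is no genuinely hard step here; the one point worth a sentence of care is the identification of $\sup_{C \in \mathcal{C}(\mathcal{G})} \tr(PC)$ with $\Theta_P(A)$, which uses only that $\mathcal{C}(\mathcal{G})$ is the convex hull of $\{\Pi A \Pi^T : \Pi \in \mathrm{Sym}(n)\}$ together with the fact that a linear function on a compact convex set is maximized at an extreme point. One could alternatively invoke Proposition~\ref{theo:icsrep} applied to the invariant convex set $\mathcal{C}(\mathcal{G})$, but one would still need this separation argument to pin down that the thresholds may be taken to equal $\Theta_P(A)$ and that the index set may be taken to be all of $\Sn$, so the direct argument above is cleaner.
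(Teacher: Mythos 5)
Your proof is correct and follows essentially the same route as the paper's: the forward inclusion via invariance and convexity of the sublevel sets of $\Theta_P$, and the reverse inclusion via the separation theorem applied to a point outside the polytope $\mathcal{C}(\mathcal{G})$. The only cosmetic difference is that you identify the separating functional's supremum over $\mathcal{C}(\mathcal{G})$ directly with $\Theta_P(A)$ via extreme points, whereas the paper routes the same fact through an intermediate threshold $\alpha$ and the invariance of $\mathcal{C}(\mathcal{G})$.
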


\begin{proof}
One direction of inclusion in this result is easily seen.  Indeed we have that for any $\Pi \in \mathrm{Sym}(n)$
\begin{equation*}
\Pi A \Pi^T \in \bigcap_{P \in \Sn} ~ \{B : B \in \Sn, ~ \Theta_P(B) \leq \Theta_P(A) \}.
\end{equation*}
As the right-hand-side is a convex set it is clear that the convex hull $\mathcal{C}(\mathcal{G})$ belongs to the set on the right-hand-side:
\begin{equation*}
\mathcal{C}(\mathcal{G}) \subseteq \bigcap_{P \in \Sn} ~ \{B : B \in \Sn, ~ \Theta_P(B) \leq \Theta_P(A) \}.
\end{equation*}

For the other direction suppose for the sake of a contradiction that we have a point $M \not\in \mathcal{C}(\mathcal{G})$ but with $\Theta_P(M) \leq \Theta_P(A)$ for all $P \in \Sn$.  As $M \not\in \mathcal{C}(\mathcal{G})$ we appeal to the separation theorem from convex analysis \cite{Roc1996} to produce a strict separating hyperplane between $M$ and $\mathcal{C}(\mathcal{G})$, i.e., a $\tilde{P} \in \Sn$ such that
\begin{equation*}
\tr(\tilde{P} B) < \alpha, \forall B \in \mathcal{C}(\mathcal{G}), ~~~ \mathrm{and} ~~~ \tr(\tilde{P} M) > \alpha.
\end{equation*}
Further as $\mathcal{C}(\mathcal{G})$ is an invariant convex set, it must be the case that
\begin{equation*}
\Theta_{\tilde{P}}(B) < \alpha, \forall B \in \mathcal{C}(\mathcal{G}).
\end{equation*}
On the other hand as $\tr(\tilde{P} M) > \alpha$ we also have that $\Theta_{\tilde{P}}(M) > \alpha$.  It is thus clear that
\begin{equation*}
\Theta_{\tilde{P}}(A) < \alpha < \Theta_{\tilde{P}}(M),
\end{equation*}
which leads us to a contradiction and concludes the proof.
\end{proof}

Therefore elementary convex graph invariants are useful for representing all the ``convex properties'' of a graph.  This result agrees with the intuition that the ``maximum amount of information'' that one can hope to obtain from convex graph invariants about a graph should be limited fundamentally by the convex hull of the graph.

As mentioned previously in many cases the convex hull of a graph may be intractable to characterize.  One can obtain outer bounds to this convex hull by using a tractable subset of elementary convex graph invariants; therefore we may obtain tractable but weaker convex uncertainty sets than the convex hull of a graph.  From Proposition~\ref{theo:chrep} such approximations can be refined as we use additional elementary convex graph invariants.  As an example the spectral convex constraint sets described in Section~\ref{subsec:exics} provide a tractable relaxation that plays a prominent role in our experiments in Section~\ref{sec:comp}.

\subsection{Comparison with Spectral Invariants}
\label{subsec:spec}

Convex functions that are invariant under certain group actions have been studied previously.  The most prominent among these is the set of convex functions of symmetric matrices that are invariant under conjugation by orthogonal matrices \cite{Dav1957}:
\begin{equation*}
f(M) = f(V M V^T), ~ \forall ~M \in \Sn, ~ \forall ~ V \in \mathrm{O}(n).
\end{equation*}
It is clear that such functions depend only on the spectrum of a symmetric matrix, and therefore we refer to them as \emph{convex spectral invariants}:
\begin{equation*}
f(M) = \tilde{f}(\lambda(M)),
\end{equation*}
where $\tilde{f}: \R^n \rightarrow \R$.  It is shown in \cite{Dav1957} that $f$ is convex if and only if $\tilde{f}$ is a convex function that is \emph{symmetric} in its argument:
\begin{equation*}
\tilde{f}(\bx) = \tilde{f}(\Pi \bx), ~ \forall \bx \in \R^n, \forall \Pi \in \mathrm{Sym}(n).
\end{equation*}
One can check that any convex spectral invariant can be represented as the supremum over monotone functionals of the spectrum of the form:
\begin{equation*}
\tilde{f}(\bx) = \mathbf{v}^T \overline{\bx} - \alpha,
\end{equation*}
for $\mathbf{v} \in \R^n$ such that $\mathbf{v}_1 \geq \cdots \geq \mathbf{v}_n$.  See the Appendix for more details.

A convex spectral invariant is also a convex graph invariant as invariance with respect to conjugation by any orthogonal matrix is a stronger requirement than invariance with respect to conjugation by any permutation matrix.  As many convex spectral invariants are tractable to compute, they form an important subclass of convex graph invariants.  In Section~\ref{subsec:qap} we discuss a natural approximation to elementary convex graph invariants using convex spectral invariants by replacing the symmetric group $\mathrm{Sym}(n)$ in the maximization by the orthogonal group $\mathrm{O}(n)$.  Finally one can define a spectrally invariant convex set $\mathcal{S}$ (analogous to invariant convex sets defined in Section~\ref{subsec:cgidef}) in which $M \in \mathcal{S}$ if and only if $V M V^T \in \mathcal{S}$ for all $V \in \mathrm{O}(n)$.  Such sets are very useful in order to impose various spectral constraints on graphs, and often have tractable semidefinite representations.

\subsection{Convex versus Non-Convex Invariants}
\label{subsec:nonci}

There are many graph invariants that are not convex.  In this section we give two examples that serve to illustrate the strengths and weaknesses of convex graph invariants.  First consider the spectral invariant given by the fifth largest eigenvalue of a graph, i.e., $\lambda_5(A)$ for a graph specified by adjacency matrix $A$.  This function is a graph invariant but it is not convex.  However from Section~\ref{subsec:excgi} we have that the \emph{sum} of the first five eigenvalues of a graph is a convex graph invariant.  More generally any function of the form $v_1 \lambda_1 + \cdots + v_5 \lambda_5$ with $v_1 \geq \cdots \geq v_5$ is a convex graph invariant.  Thus information about the fifth eigenvalue can be obtained in a ``convex manner'' only by including information about all the top five eigenvalues (or all the bottom $n-4$ eigenvalues).  As a second example consider the (weighted) sum of the total number of triangles that occur as subgraphs in a graph.  This function is again a non-convex graph invariant.  However recall from the forbidden subgraph example in Section~\ref{subsec:exics} that we can use elementary convex graph invariants to test whether a graph contains a triangle as a subgraph (with the edges of the triangle having large weights).  Therefore, roughly speaking convex graph invariants can be used to decide whether a graph contains a triangle, while general non-convex graph invariants can provide more information about the total number of triangles in a graph.  These examples demonstrate that convex graph invariants have certain limitations in terms of the type of information that they can convey about a graph.

The weaker form of information about a graph conveyed by convex graph invariants is nonetheless still useful in distinguishing between graphs.  As the next result demonstrates convex graph invariants are strong enough to distinguish between non-isomorphic graphs. This lemma follows from a straightforward application of Proposition~\ref{theo:chrep}:

\begin{lemma} \label{theo:complete}
Let $\mathcal{G}_1,\mathcal{G}_2$ be two non-isomorphic graphs represented by adjacency matrices $A_1,A_2 \in \Sn$, i.e., there exists no permutation $\Pi \in \mathrm{Sym}(n)$ such that $A_1 = \Pi A_2 \Pi^T$.  Then there exists a $P \in \Sn$ such that $\Theta_P(A_1) \neq \Theta_P(A_2)$.
\end{lemma}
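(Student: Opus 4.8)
The plan is to derive this lemma as a direct consequence of Proposition~\ref{theo:chrep}, exactly as the surrounding text advertises. The key observation is that $\mathcal{C}(\mathcal{G}_1)$ and $\mathcal{C}(\mathcal{G}_2)$ are distinct sets whenever $\mathcal{G}_1$ and $\mathcal{G}_2$ are non-isomorphic. To see this, note that the extreme points of $\mathcal{C}(\mathcal{G}_i)$ are precisely the matrices $\{\Pi A_i \Pi^T : \Pi \in \mathrm{Sym}(n)\}$, as remarked just after the definition of the convex hull of a graph. Since $A_1$ is not of the form $\Pi A_2 \Pi^T$ for any permutation $\Pi$, the matrix $A_1$ is an extreme point of $\mathcal{C}(\mathcal{G}_1)$ that is not an extreme point of $\mathcal{C}(\mathcal{G}_2)$; in particular $\mathcal{C}(\mathcal{G}_1) \neq \mathcal{C}(\mathcal{G}_2)$. (A small point to verify here: we should check that the two polytopes cannot coincide as sets despite having different extreme-point \emph{labels}; but two compact convex sets are equal if and only if their extreme-point sets are equal, so it suffices that $A_1 \notin \{\Pi A_2 \Pi^T\}$, which is the hypothesis.)

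Given that $\mathcal{C}(\mathcal{G}_1) \neq \mathcal{C}(\mathcal{G}_2)$, at least one of the two strict inclusions fails; without loss of generality suppose there exists $M \in \mathcal{C}(\mathcal{G}_1)$ with $M \notin \mathcal{C}(\mathcal{G}_2)$. Now apply Proposition~\ref{theo:chrep} to $\mathcal{G}_2$: since
\begin{equation*}
\mathcal{C}(\mathcal{G}_2) = \bigcap_{P \in \Sn} \{B : B \in \Sn, ~ \Theta_P(B) \leq \Theta_P(A_2)\},
\end{equation*}
the fact that $M \notin \mathcal{C}(\mathcal{G}_2)$ means there is some $P \in \Sn$ with $\Theta_P(M) > \Theta_P(A_2)$. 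On the other hand, $M \in \mathcal{C}(\mathcal{G}_1)$, and applying Proposition~\ref{theo:chrep} to $\mathcal{G}_1$ gives $\Theta_P(M) \leq \Theta_P(A_1)$ for this same $P$. Chaining the two inequalities yields $\Theta_P(A_2) < \Theta_P(M) \leq \Theta_P(A_1)$, hence $\Theta_P(A_1) \neq \Theta_P(A_2)$, which is exactly the claim.

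If one prefers to avoid the symmetric choice of which inclusion fails, an even cleaner route is to argue directly that if $\Theta_P(A_1) = \Theta_P(A_2)$ for \emph{every} $P \in \Sn$, then Proposition~\ref{theo:chrep} forces $\mathcal{C}(\mathcal{G}_1) = \mathcal{C}(\mathcal{G}_2)$ (the two intersections defining the hulls are literally identical), and then the extreme-point argument above produces the contradiction that $A_1$ is an extreme point of $\mathcal{C}(\mathcal{G}_2)$ and must therefore equal $\Pi A_2 \Pi^T$ for some $\Pi$. I expect the only genuinely non-trivial step is the assertion that $A_i$ really is an extreme point of $\mathcal{C}(\mathcal{G}_i)$ — i.e., that $A_i$ cannot be written as a nontrivial convex combination of the matrices $\Pi A_i \Pi^T$. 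This is intuitively clear and is stated as a ``one can check'' remark in the text, but it is where any care is needed; everything else is a routine unwinding of Proposition~\ref{theo:chrep} and the separation theorem it rests on.
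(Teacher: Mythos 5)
Your proposal is correct and follows essentially the same route as the paper: the ``cleaner route'' you sketch at the end (assume $\Theta_P(A_1)=\Theta_P(A_2)$ for all $P$, invoke Proposition~\ref{theo:chrep} to get $\mathcal{C}(\mathcal{G}_1)=\mathcal{C}(\mathcal{G}_2)$, then compare extreme points) is verbatim the paper's argument, and your first variant is just a mild rephrasing of it via a separating $P$. The one step you flag --- that each $\Pi A \Pi^T$ is genuinely an extreme point of $\mathcal{C}(\mathcal{G})$ --- is left at the same ``one can check'' level in the paper, and follows since all matrices in the orbit share the same Frobenius norm.
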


\begin{proof}
Assume for the sake of a contradiction that $\Theta_P(A_1) = \Theta_P(A_2)$ for all $P \in \Sn$.  Then we have from Proposition~\ref{theo:chrep} that $\mathcal{C}(\mathcal{G}_1) = \mathcal{C}(\mathcal{G}_2)$.  As the extreme points of these polytopes must be the same, there must exist a permutation $\Pi \in \mathrm{Sym}(n)$ such that $A_1 = \Pi A_2 \Pi^T$.  This leads to a contradiction.
\end{proof}

Hence for any two given non-isomorphic graphs there exists an elementary convex graph invariant that evaluates to different values for these two graphs.  Consequently elementary convex graph invariants form a \emph{complete} set of graph invariants as they can distinguish between any two non-isomorphic graphs.


\section{Computing Convex Graph Invariants}
\label{sec:comp}

In this section we focus on efficiently computing and approximating convex graph invariants, and on tractable representations of invariant convex sets.  We begin by studying the question of computing elementary convex graph invariants, before moving on to more general convex invariants.

\subsection{Elementary Invariants and the Quadratic Assignment Problem}
\label{subsec:qap}

As all convex graph invariants can be represented using only elementary invariants, we initially focus on computing the latter.  Computing an elementary convex graph invariant $\Theta_P(A)$ for general $A,P$ is equivalent to solving the so-called Quadratic Assignment Problem (QAP) \cite{Cel1998}.  Solving QAP is hard in general, because it includes as a special case the Hamiltonian cycle problem; if $P$ is the adjacency matrix of the $n$-cycle, then for an unweighted graph specified by adjacency matrix $A$ we have that $\Theta_P(A)$ is equal to $2n$ if and only if the graph contains a Hamiltonian cycle.  However there are well-studied spectral and semidefinite relaxations for QAP, which we discuss next.

The \emph{spectral relaxation} of $\Theta_P(A)$ is obtained by replacing the symmetric group $\mathrm{Sym}(n)$ in the definition by the orthogonal group $\mathrm{O}(n)$:
\begin{equation}
\Lambda_P(A) = \max_{V \in \mathrm{O}(n)} ~ \tr(P V A V^T). \label{eq:spec1}
\end{equation}
Clearly $\Theta_P(A) \leq \Lambda_P(A)$ for all $A,P \in \Sn$.  As one might expect $\Lambda_P(A)$ has a simple closed-form solution \cite{FinBR1987}:
\begin{equation}
\Lambda_P(A) = \lambda(P)^T \lambda(A), \label{eq:spec2}
\end{equation}
where $\lambda(A),\lambda(P)$ are the eigenvalues of $A,P$ sorted in descending order.

The spectral relaxation offers a simple bound, but is quite weak in many instances.  Next we consider the well-studied \emph{semidefinite relaxation} for the QAP, which offers a tighter relaxation \cite{ZhaKRW1998}.  The main idea behind the semidefinite relaxation is that we can linearize $\Theta_P(A)$ as follows:
\begin{eqnarray*}
\Theta_P(A) &=& \max_{\Pi \in \mathrm{Sym}(n)} ~ \tr(P \Pi A \Pi^T) \\ &=& \max_{\bx \in \R^{n^2}, \bx = \mathrm{vec}(\Pi), \Pi \in \mathrm{Sym}(n)} ~ \langle \bx, (A \otimes P) \bx \rangle \\ &=& ~ \max_{\bx \in \R^{n^2}, \bx = \mathrm{vec}(\Pi), \Pi \in \mathrm{Sym}(n)} ~ \tr((A \otimes P) \bx \bx^T).
\end{eqnarray*}
Here $A \otimes P$ denotes the tensor product between $A$ and $P$, and $\mathrm{vec}$ denotes the operation that stacks the columns of a matrix into a single vector. Consequently it is of interest to characterize the following convex hull:
\begin{equation*}
\mathrm{conv}\{\bx \bx^T : \bx \in \R^{n^2}, ~\bx = \mathrm{vec}(\Pi), ~\Pi \in \mathrm{Sym}(n)\}.
\end{equation*}
There is no known tractable characterization of this set, and by considering tractable approximations the semidefinite relaxation to $\Theta_P(A)$ is then obtained as follows:
\begin{equation}
\begin{aligned}
\Omega_P(A) = \max_{\by \in \R^{n^2}, ~ Y \in \mathbf{S}(n^2)} & ~~~ \tr(P \otimes A) \\ \mbox{s.t.} & ~~~ \tr((I \otimes (J-I))Y + ((J-I) \otimes I)Y) = 0 \\ & ~~~ \tr(Y) - 2\by^T \ones = -n \\ & ~~~Y \geq 0, ~
 \begin{pmatrix}
  1 & \by^T \\
  \by & Y
 \end{pmatrix} \succeq 0.
\end{aligned}
\label{eq:sdpqap}
\end{equation}
We refer the reader to \cite{ZhaKRW1998} for the detailed steps involved in the construction of this relaxation.  This SDP relaxation gives an upper bound to $\Theta_P(A)$, i.e., $\Omega_P(A) \geq \Theta_P(A)$.  One can show that if the extra rank constraint
\begin{equation*}
\mathrm{rank}\begin{pmatrix}
  1 & \by^T \\
  \by & Y
 \end{pmatrix} = 1
\end{equation*}
is added to the SDP \eqref{eq:sdpqap}, then $\Omega_P(A) = \Theta_P(A)$. Therefore if the optimal value of the SDP \eqref{eq:sdpqap} is achieved at some $\hat{\by},\hat{Y}$ such that this rank-one constraint is satisfied, then the relaxation is tight, i.e., we would have that $\Omega_P(A) = \Theta_P(A)$.

While the semidefinite relaxation \eqref{eq:sdpqap} can in principle be computed in polynomial-time, the size of the variable $Y \in \mathbf{S}(n^2)$ means that even moderate size problem instances are not well-suited to solution by interior-point methods.  In many practical situations however, we often have that the matrix $P \in \Sn$ represents the adjacency matrix of some small graph on $k$ nodes with $k \ll n$, i.e., $P$ is nonzero only inside a $k \times k$ submatrix and is zero-padded elsewhere so that it lies in $\Sn$.  For example as discussed in Section~\ref{subsec:exics},  $P$ may represent the adjacency matrix of a triangle in a constraint expressing that a graph is triangle-free.  In such cases computing or approximating $\Theta_P(A)$ may be done more efficiently as follows:
\begin{enumerate}
\item{\bf Combinatorial enumeration.} For very small values of $k$ it is possible to compute $\Theta_P(A)$ efficiently even by explicit combinatorial enumeration.  The complexity of such a procedure scales as $\mathcal{O}(n^k)$.  This approach may be suitable if, for example, $P$ represents the adjacency matrix of a triangle.

\item{\bf Symmetry reduction.} For larger values of $k$, combinatorial enumeration may no longer be appropriate.  In these cases the special structure in $P$ can be exploited to reduce the size of the SDP relaxation \eqref{eq:sdpqap}.  Specifically, using the methods described in \cite{deKS2010} it is possible to reduce the size of the matrix variables from $\mathcal{O}(n^2) \times \mathcal{O}(n^2)$ to size $\mathcal{O}(kn) \times \mathcal{O}(kn)$.  More generally, it is also possible to exploit \emph{group symmetry} in $P$ to similarly reduce the size of the SDP \eqref{eq:sdpqap} (see \cite{deKS2010} for details).

\end{enumerate}

\subsection{Other Methods and Computational Issues}
\label{subsec:gencomp}

In many special cases in which computing convex graph invariants may be intractable, it is also possible to use other types of tractable semidefinite relaxations.  As described in Section~\ref{subsec:excgi} the MAXCUT value and the inverse stability number of graphs are invariants that are respectively convex and concave.  However both of these are intractable to compute, and as a result we must employ the SDP relaxations for these invariants as discussed in Section~\ref{subsec:excgi}.

Another issue that arises in practice is the \emph{representation} of invariant convex sets.  As an example, let $f(A)$ denote the SDP relaxation of the MAXCUT value as defined in \eqref{eq:maxcut1}.  As $f(A)$ is a concave graph invariant, we may be interested in representing convex constraint sets as follows:
\begin{equation*}
\{A : A \in \Sn, ~ f(A) \geq \alpha\} = \{A: A \in \Sn, ~ \tr(X A) \geq \alpha ~~ \forall X \in \Sn ~ \mathrm{s.t.} ~ X_{ii} = 1, ~ X \succeq 0\}.
\end{equation*}
In order to computationally represent such a set specified in terms of a universal quantifier, we appeal to convex duality.  Using the standard dual formulation of \eqref{eq:maxcut1}, we have that:
\begin{equation*}
\{A : A \in \Sn, ~ f(A) \geq \alpha\} = \{A: A \in \Sn, ~~ \exists Y ~ \mathrm{diagonal ~ s.t.} ~ A \succeq Y, ~ \tr(Y) \geq \alpha\}.
\end{equation*}
This reformulation provides a description in terms of existential quantifiers that is more suitable for practical representation.  Such reformulations using convex duality are well-known, and can be employed more generally (e.g., for invariant convex sets specified by sublevel sets of the inverse stability number or its relaxations in Section~\ref{subsec:excgi})
%


\section{Using Convex Graph Invariants in Applications}
\label{sec:appscgi}

In this section we give solutions to the stylized problems of Section~\ref{sec:apps} using convex graph invariants.  In order to properly state our results we begin with a few definitions.  All the convex programs in our numerical experiments are solved using a combination of the SDPT3 package \cite{TohTT} and the YALMIP parser \cite{Lof2004}.

\subsection{Preliminary Definitions}
\label{subsec:appsdef}

Let $C$ be a convex set in $\Sn$, and let $\bx \in C$ be any point in $C$. Following standard notions from convex analysis \cite{Roc1996}, the \emph{tangent cone} at $\bx$ with respect to $C$ is defined as follows:

\begin{definition}
Given a convex set $C$, the \emph{tangent cone} at a point $\bx \in C$ with respect to $C$ is the set of directions from $\bx$ to any other point in $C$:
\begin{equation*}
\T_C(\bx) = \{\alpha \bz : \bz = \by - \bx, \by \in C, \alpha \geq 0 \}.
\end{equation*}
\end{definition}

If $C$ is a convex set expressing a constraint in a convex program, the tangent cone at a point $\bx \in C$ can be viewed as the set of feasible directions at $\bx$ to other points in $C$.  Next we define the \emph{normal cone} at $\bx$ with respect to $C$, again following the usual conventions in convex analysis \cite{Roc1996}:

\begin{definition}
Given a convex set $C$, the \emph{normal cone} at a point $\bx \in C$ with respect to $C$ is the set of normal vectors to supporting hyperplanes of $C$ at $\bx$:
\begin{equation*}
\N_C(\bx) = \{\bz : \langle \bz,  \by - \bx \rangle \leq 0 ~ \forall \by \in C\}.
\end{equation*}
\end{definition}

The normal cone and the tangent cone are polars of each other \cite{Roc1996}. A key property of normal cones that we use in stating our results is that for any convex set $C \subseteq \Sn$, the normal cones at all the extreme points of $C$ form a \emph{partition}\footnote{Note that there may be overlap on the boundaries of the normal cones at the extreme points, but these overlaps have smaller dimension than those of the normal cones.} of $\Sn$ \cite{Roc1996}.


\subsection{Application: Graph Deconvolution}
\label{subsec:dec}

Given a combination of two graphs overlaid on the same set of nodes, the graph deconvolution problem is to recover the individual graphs (as introduced in Section~\ref{subsec:appsdec}).

\begin{problem}
Let $\mathcal{G}_1$ and $\mathcal{G}_2$ be two graphs specified by particular adjacency matrices $A^\ast_1, A^\ast_2 \in \Sn$.  We are given the sum $A = A^\ast_1 + A^\ast_2$, and the additional information that $A^\ast_1,A^\ast_2$ correspond to particular realizations (labelings of nodes) of $\mathcal{G}_1, \mathcal{G}_2$.  The goal is to recover $A^\ast_1$ and $A^\ast_2$ from $A$.
\end{problem}

See Figure~\ref{fig:dec1} for an example illustrating this problem.  The key unknown in this problem is the specific labeling of the nodes of $\mathcal{G}_1$ and $\mathcal{G}_2$ relative to each other in the composite graph represented by $A$.  As described in Section~\ref{subsec:rob}, the best convex constraints that express this uncertainty are the convex hulls of the graphs $\mathcal{G}_1, \mathcal{G}_2$.  Therefore we consider the following natural solution based on convex optimization to solve the deconvolution problem:

\begin{solution}
Recall that $\mathcal{C}(\mathcal{G}_1)$ and $\mathcal{C}(\mathcal{G}_2)$ are the convex hulls of the unlabeled graphs $\mathcal{G}_1,\mathcal{G}_2$ (which we are given), and that $\|\cdot\|$ denotes the Euclidean norm.  We propose the following convex program to recover $A_1, A_2$:
\begin{equation}
\begin{aligned}
(\hat{A_1},\hat{A_2}) = \arg \min_{A_1,A_2 \in \Sn} & ~~~ \|A - A_1 - A_2\| \\ \mbox{s.t.} & ~~~ A_1 \in \mathcal{C}(\mathcal{G}_1), ~ A_2 \in \mathcal{C}(\mathcal{G}_2).
\end{aligned}
\label{eq:deconv1}
\end{equation}
One could also use in the objective any other norm that is invariant under conjugation by permutation matrices.  This program is convex, although it may not be tractable if the sets $\mathcal{C}(\mathcal{G}_1),\mathcal{C}(\mathcal{G}_2)$ cannot be efficiently represented.  Therefore it may be desirable to use tractable convex relaxations $C_1,C_2$ of the sets $\mathcal{C}(\mathcal{G}_1), \mathcal{C}(\mathcal{G}_2)$, i.e., $\mathcal{C}(\mathcal{G}_1) \subseteq C_1 \subset \Sn$ and $\mathcal{C}(\mathcal{G}_2) \subseteq C_2 \subset \Sn$:
\begin{equation}
\begin{aligned}
(\hat{A_1},\hat{A_2}) = \arg \min_{A_1,A_2 \in \Sn} & ~~~ \|A - A_1 - A_2\| \\ \mbox{s.t.} & ~~~ A_1 \in C_1, ~ A_2 \in C_2.
\end{aligned}
\label{eq:deconv2}
\end{equation}
\end{solution}

Recall from Proposition~\ref{theo:chrep} that we can represent $\mathcal{C}(\mathcal{G})$ using all the elementary convex graph invariants.  Tractable relaxations to this convex hull may be obtained, for example, by just using spectral invariants, degree-sequence invariants, or any other subset of invariant convex set constraints that can be expressed efficiently.  We give numerical examples later in this section.  The following result gives conditions under which we can exactly recover $A^\ast_1,A^\ast_2$ using the convex program \eqref{eq:deconv2}:

\begin{proposition} \label{theo:dec}
Given the problem setup as described above, we have that $(\hat{A_1},\hat{A_2}) = (A^\ast_1,A^\ast_2)$ is the \emph{unique optimum} of \eqref{eq:deconv2} if and only if:
\begin{equation*}
T_{C_1}(A^\ast_1) \cap -T_{C_2}(A^\ast_2) = \{0\},
\end{equation*}
where $-T_{C_2}(A^\ast_2)$ denotes the negative of the tangent cone $T_{C_2}(A^\ast_2)$.
\end{proposition}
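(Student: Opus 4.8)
The plan is to recognize that the convex program \eqref{eq:deconv2} asks us to find $(A_1, A_2) \in C_1 \times C_2$ whose sum is closest (in Euclidean norm) to the given $A = A_1^\ast + A_2^\ast$; since $A_1^\ast + A_2^\ast$ is itself attainable (namely by $(A_1^\ast, A_2^\ast)$), the optimal value is $0$, and the question is exactly whether $(A_1^\ast, A_2^\ast)$ is the \emph{only} pair in $C_1 \times C_2$ summing to $A$. So the proposition reduces to a purely geometric statement: the affine slice $\{(A_1, A_2) : A_1 + A_2 = A\}$ meets $C_1 \times C_2$ in the single point $(A_1^\ast, A_2^\ast)$ if and only if $T_{C_1}(A_1^\ast) \cap (-T_{C_2}(A_2^\ast)) = \{0\}$. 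First I would make this reduction explicit and note that uniqueness of the minimizer is equivalent to this transversality-type condition.

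Next I would argue the ``if'' direction by contradiction. Suppose $(\tilde{A}_1, \tilde{A}_2) \in C_1 \times C_2$ is another feasible pair with $\tilde{A}_1 + \tilde{A}_2 = A = A_1^\ast + A_2^\ast$. Set $Z = \tilde{A}_1 - A_1^\ast$. Then $Z \in T_{C_1}(A_1^\ast)$ because $\tilde{A}_1 \in C_1$ and tangent cones at a point of a convex set consist of all nonnegative multiples of differences to other points of the set. On the other hand $\tilde{A}_2 - A_2^\ast = -Z$, and since $\tilde{A}_2 \in C_2$ the same reasoning gives $-Z \in T_{C_2}(A_2^\ast)$, i.e. $Z \in -T_{C_2}(A_2^\ast)$. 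Hence $Z \in T_{C_1}(A_1^\ast) \cap (-T_{C_2}(A_2^\ast))$, so by hypothesis $Z = 0$, which means $(\tilde{A}_1, \tilde{A}_2) = (A_1^\ast, A_2^\ast)$; this establishes uniqueness.

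For the ``only if'' direction I would again use contrapositive: suppose there is a nonzero $Z \in T_{C_1}(A_1^\ast) \cap (-T_{C_2}(A_2^\ast))$. The mild technical point here — and the one I expect to be the main obstacle — is that membership in a tangent cone only guarantees that a \emph{small} step in direction $Z$ stays in the set, not a unit step; a tangent cone as defined in the excerpt is $\{\alpha(\by - \bx) : \by \in C, \alpha \ge 0\}$, so $Z = \alpha(\tilde{A}_1 - A_1^\ast)$ for some $\tilde{A}_1 \in C_1$ and $\alpha \ge 0$ (and since $Z \ne 0$ we may take $\alpha > 0$), and similarly $-Z = \beta(\tilde{A}_2 - A_2^\ast)$ with $\beta > 0$. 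By rescaling $Z$ (replacing it by a suitably small positive multiple, using that both $C_1$ and $C_2$ are convex so that $A_i^\ast + t(\tilde{A}_i - A_i^\ast) \in C_i$ for $t \in [0,1]$) I can arrange a common small $\epsilon > 0$ with $A_1^\ast + \epsilon Z \in C_1$ and $A_2^\ast - \epsilon Z \in C_2$. Then $(A_1^\ast + \epsilon Z, A_2^\ast - \epsilon Z)$ is a feasible pair for \eqref{eq:deconv2}, distinct from $(A_1^\ast, A_2^\ast)$ since $\epsilon Z \ne 0$, and its sum is still $A$, so it is also an optimum of value $0$; hence the optimum is not unique. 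Combining the two directions completes the proof. I would close by remarking that this is the standard ``descent cone''/transversality argument from convex recovery theory, specialized to the two-set decomposition with a permutation-invariant norm, and that the same argument works verbatim for the exact convex hulls $\mathcal{C}(\mathcal{G}_i)$ in \eqref{eq:deconv1}.
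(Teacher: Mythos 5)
Your proposal is correct and follows essentially the same route as the paper: both directions hinge on the observation that a second feasible pair summing to $A$ yields a nonzero element of $T_{C_1}(A^\ast_1) \cap -T_{C_2}(A^\ast_2)$ and, conversely, that any nonzero element of that intersection can be rescaled to produce a second zero-cost feasible point. Your explicit handling of the rescaling step and the reduction to ``the affine slice meets $C_1 \times C_2$ only at $(A^\ast_1, A^\ast_2)$'' is a slightly more careful write-up of the argument the paper sketches, but not a different proof.
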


\begin{proof}
Note that in the setup described above $(A^\ast_1,A^\ast_2)$ is an optimal solution of the convex program \eqref{eq:deconv2} as this point is feasible (since by construction $A^\ast_1 \in \mathcal{C}(\mathcal{G}_1) \subseteq C_1$ and $A^\ast_2 \in \mathcal{C}(\mathcal{G}_2) \subseteq C_2$), and the cost function achieves its minimum at this point.  This result is concerned with $(A^\ast_1,A^\ast_2)$ being the \emph{unique} optimal solution.

For one direction suppose that $T_{C_1}(A^\ast_1) \cap -T_{C_2}(A^\ast_2) = \{0\}$.  Then there exists no $Z_1 \in T_{C_1}(A^\ast_1), Z_2 \in T_{C_2}(A^\ast_2)$ such that $Z_1 + Z_2 = 0$ with $Z_1 \neq 0, Z_2 \neq 0$.  Consequently every feasible direction from $(A^\ast_1,A^\ast_2)$ into $C_1 \times C_2$ would increase the value of the objective.  Thus $(A^\ast_1,A^\ast_2)$ is the unique optimum of \eqref{eq:deconv2}.

For the other direction suppose that $(A^\ast_1, A^\ast_2)$ is the unique optimum of \eqref{eq:deconv2}, and assume for the sake of a contradiction that $T_{C_1}(A^\ast_1) \cap -T_{C_2}(A^\ast_2)$ contains a nonzero element, which we'll denote by $Z$.  There exists a scalar $\alpha > 0$ such that $A^\ast_1 + \alpha Z \in C_1$ and $A^\ast_2 - \alpha Z \in C_2$.  Consequently $(A^\ast_1 + \alpha Z, A^\ast_2 - \alpha Z)$ is also a feasible solution that achieves the lowest possible cost of zero.  This contradicts the assumption that $(A^\ast_1, A^\ast_2)$ is the unique optimum.
\end{proof}

Thus we have that \emph{transverse intersection}  of the tangent cones $T_{C_1}(A^\ast_1)$ and $-T_{C_2}(A^\ast_2)$ is equivalent to \emph{exact recovery} of $(A^\ast_1,A^\ast_2)$ given the sum $A = A^\ast_1+A^\ast_2$.  As $\mathcal{C}(\mathcal{G}_1) \subseteq C_1$ and $\mathcal{C}(\mathcal{G}_2) \subseteq C_2$, we have that $T_{\mathcal{C}(\mathcal{G}_1)}(A^\ast_1) \subseteq T_{C_1}(A^\ast_1)$ and $T_{\mathcal{C}(\mathcal{G}_2)} \subseteq T_{C_2}(A^\ast_2)$.  These relations follow from the fact that the set of feasible directions from $A^\ast_1$ and $A^\ast_2$ into the respective convex sets is enlarged.  Therefore the tangent cone transversality condition of Proposition~\ref{theo:dec} is generally more difficult to satisfy if we use relaxations $C_1,C_2$ to the convex hulls $\mathcal{C}(\mathcal{G}_1), \mathcal{C}(\mathcal{G}_2)$.  Consequently we have a \emph{tradeoff} between the complexity of solving the convex program, and the possibility of exactly recovering $(A^\ast_1,A^\ast_2)$.  However the following example suggests that it is possible to obtain tractable relaxations that still allow for perfect recovery.

\begin{figure}
\begin{center}
\epsfig{file=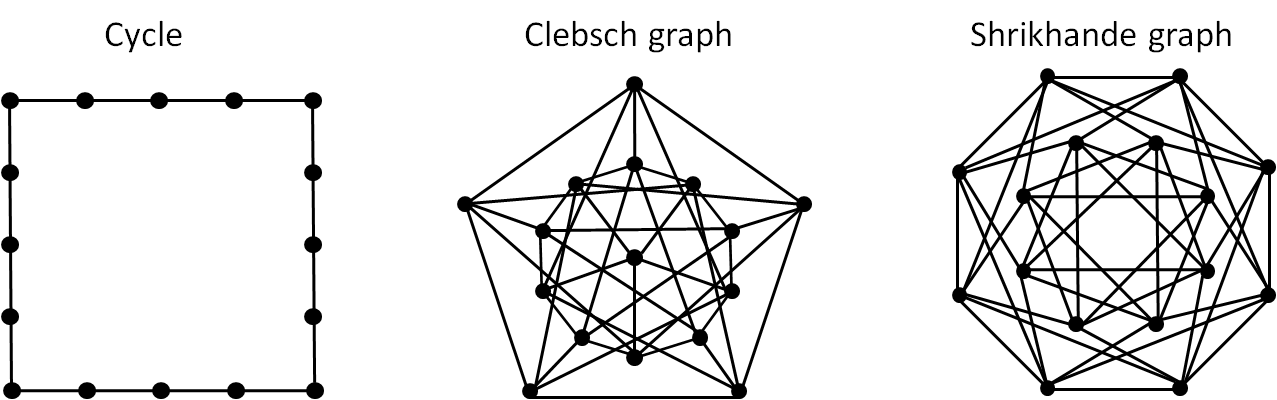,width=12cm,height=4cm} \caption{The three graphs used in the deconvolution experiments of Section~\ref{subsec:dec}.  The Clebsch graph and the Shrikhande graph are examples of strongly regular graphs on $16$ nodes \cite{GodR2004}; see Section~\ref{subsec:dec} for more details about the properties of such graphs.} \label{fig:dec2}
\end{center}
\end{figure}

\textbf{Example.} We consider the $16$-cycle, the Shrikhande graph, and the Clebsch graph (see Figure~\ref{fig:dec2}), and investigate the deconvolution problem for all three pairings of these graphs.  For illustration purposes suppose $A^\ast_1$ is an adjacency matrix of the unweighted $16$-node cycle denoted $\mathcal{G}_1$, and that $A^\ast_2$ is an adjacency matrix of the $16$-node Clebsch graph denoted $\mathcal{G}_2$ (see Figure~\ref{fig:dec1}).  These adjacency matrices are random instances chosen from the set of all valid adjacency matrices that represent the graphs $\mathcal{G}_1,\mathcal{G}_2$.  Given the sum $A = A^\ast_1 + A^\ast_2$, we construct convex constraint sets $C_1,C_2$ as follows:
\begin{eqnarray*}
C_1 &=& \A ~ \cap ~ \mathcal{E}(A^\ast_1) \\ C_2 &=& \A ~ \cap ~ \mathcal{E}(A^\ast_2).
\end{eqnarray*}
Here $\mathcal{E}(A)$ represents the spectral constraints of Section~\ref{subsec:exics}.  Therefore the graphs $\mathcal{G}_1$ and $\mathcal{G}_2$ are characterized purely by their spectral properties.  By running the convex program described above for $100$ random choices of labelings of the vertices of the graphs $\mathcal{G}_1,\mathcal{G}_2$, we obtained \emph{exact} recovery of the adjacency matrices $(A^\ast_1,A^\ast_2)$ in all cases (see Table~\ref{tab:sum}).  \emph{Thus we have exact decomposition based only on convex spectral constraints, in which the only invariant information used to characterize the component graphs $\mathcal{G}_1, \mathcal{G}_2$ are the spectra of $\mathcal{G}_1,\mathcal{G}_2$}.  Similarly successful decomposition results using only spectral invariants are also seen in the cycle/Shrikhande graph deconvolution problem, and the Clebsch graph/Shrikhande graph deconvolution problem; Table~\ref{tab:sum} gives complete results.

The inspiration for using the Clebsch graph and the Shrikhande graph as examples for deconvolution is based on Proposition~\ref{theo:dec}. Specifically, a graph for which the tangent cone with respect to the corresponding spectral constraint set $\mathcal{E}(A)$ (defined in Section~\ref{subsec:exics}) is small is well-suited to being deconvolved from other graphs using spectral invariants.  This is because the tangent cone being smaller implies that the transversality condition of Proposition~\ref{theo:dec} is easier to satisfy.  In order to obtain small tangent cones with respect to spectral constraint sets, we seek graphs that have many \emph{repeated eigenvalues}.  \emph{Strongly regular graphs}, such as the Clebsch graph and the Shrikhande graph, are prominent examples of graphs with repeated eigenvalues as they have only three distinct eigenvalues.  A strongly regular graph is an unweighted regular graph (i.e., each node has the same degree) in which every pair of adjacent vertices have the same number of common neighbors, and every pair of non-adjacent vertices have the same number of common neighbors \cite{GodR2004}.  We explore in more detail the properties of these and other graph classes in a separate report \cite{ChaPW1}, where we characterize families of graphs for which the transverse intersection condition of Proposition~\ref{theo:dec} provably holds for constraint sets $C_1,C_2$ constructed using tractable graph invariants.

\begin{table}[t]
\centering
\begin{tabular}{||c|c||}\hline
 Underlying graphs & $\#$ successes in $100$ random trials \\ \hline\hline
 The $16$-cycle and the Clebsch graph & 100 \\ \hline
 The $16$-cycle and the Shrikhande graph & 96 \\ \hline
 The Clebsch graph and the Shrikhande graph & 94\\
 \hline
\end{tabular}
\caption{A summary of the results of graph deconvolution via convex optimization:  We generated $100$ random instances of each deconvolution problem by randomizing over the labelings of the components.  The convex program uses only spectral invariants to characterize the convex hulls of the component graphs, as described in Section~\ref{subsec:dec}.} \label{tab:sum}
\end{table}


\subsection{Application: Generating Graphs with Desired Properties}
\label{subsec:id}

We first consider the problem of constructing a graph with certain desired structural properties.

\begin{problem}
Suppose we are given structural constraints on a graph in terms of a collection of (possibly nonconvex) graph invariants $\{h_j(A) = \alpha_j\}$.  Can we recover a graph that is consistent with these constraints?  For example we may be given constraints on the spectrum, the degree distribution, the girth, and the MAXCUT value.  Can we construct some graph $\mathcal{G}$ that is consistent with this knowledge?
\end{problem}

This problem may be infeasible in that there may no graph consistent with the given information.  We do not address this feasibility question here, and instead focus only on the computational problem of generating graphs that satisfy the given constraints assuming such graphs do exist.  Next we propose a convex programming approach using invariant convex sets to construct a graph $\mathcal{G}$, specified by an adjacency matrix $A$, which satisfies the required constraints.  Both the problem as well the solution can be suitably modified to include inequality constraints.

\begin{solution}
We combine information from all the invariants to construct an invariant convex set $C$.  Given a constraint of the form $h_j(A) = \alpha_j$, we consider the following convex set:
\begin{equation*}
C_j = \mathrm{conv}\{A : A \in \Sn, ~ h_j(A) = \alpha_j\}.
\end{equation*}
This set is convex by construction, and is an invariant convex set if $h_j$ is a graph invariant.  If $h_j$ is a convex graph invariant this set is equal to the sublevel set $\{A : A \in \Sn, ~ h_j(A) \leq \alpha_j\}$.  Given a collection of constraints $\{h_j(A) = \alpha_j\}$ we then form an invariant convex constraint set as follows:
\begin{equation*}
C = \cap_j ~~ C_j.
\end{equation*}
Therefore any invariant information that is amenable to approximation as a convex constraint set can be incorporated in such a framework.  For example constraints on the degree distribution or the spectrum can be naturally relaxed to tractable convex constraints, as described in Section~\ref{subsec:exics}.  If the set $C$ as defined above is intractable to compute, one may further relax $C$ to obtain efficient approximations.  In many cases of interest a subset of the boundary of $C$ corresponds to points at which \emph{all} the constraints are active $\{A : h_j(A) = \alpha_j\}$.  In order to recover one of these extreme points, we maximize a \emph{random} linear functional defined by $M \in \Sn$ (with the entries in the upper-triangular part chosen to be independent and identically distributed to zero-mean, variance-one standard Gaussians) over the set $C$:
\begin{equation}
\begin{aligned}
\hat{A} = \arg\max_{A \in \Sn} & ~~~ \tr(M A) \\ \mbox{s.t.} & ~~~ A \in C.
\end{aligned}
\label{eq:iden1}
\end{equation}
This convex program is successful if $\hat{A}$ is indeed an extreme point at which all the constraints $\{h_j(A) = \alpha_j\}$ are satisfied.
\end{solution}

Clearly this approach is well-suited for constructing constrained graphs only if the convex set $C$ described in the solution scheme contains many extreme points at which all the constraints are satisfied.  The next result gives conditions under which the convex program recovers an $\hat{A}$ that satisfies all the given constraints:

\begin{proposition} \label{theo:ht}
Consider the problem and solution setup as defined above.  Define the set $\N$ as follows:
\begin{equation*}
\N = \bigcup_{\{A ~:~ A \in C, ~ h_j(A) = \alpha_j ~\forall j\}} ~~~ \N_C(A).
\end{equation*}
If $M \in \N$ then the optimum $\hat{A}$ of the convex program \eqref{eq:iden1} satisfies all the specified constraints exactly.  In particular if $M$ is chosen uniformly at random as described above, then the probability of success is equal to the fraction of $\Sn$ covered by the union of the normal cones $\N$.
\end{proposition}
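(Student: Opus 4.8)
The argument rests on the basic duality fact that, for a compact convex set $C$, a point $\hat A \in C$ maximizes $\langle M, A\rangle$ over $A \in C$ if and only if $M \in \N_C(\hat A)$ — this is just the definition of the normal cone, since $M \in \N_C(\hat A)$ says precisely that $\langle M, A - \hat A\rangle \le 0$ for every $A \in C$. I take $C$ to be compact so that the maximum in \eqref{eq:iden1} is attained (automatic when adjacency matrices are restricted to $\A$, and more generally whenever some $h_j$ has bounded level sets). Write $G = \{A \in C : h_j(A) = \alpha_j ~ \forall j\}$ for the target set, so that $\N = \bigcup_{A \in G}\N_C(A)$; when the $h_j$ are continuous $G$ is a closed subset of the compact $C$, and unwinding the duality fact gives the reformulation $\N = \{M : \max_{A \in G}\langle M, A\rangle = \max_{A \in C}\langle M, A\rangle\}$, a closed (hence measurable) subset of $\Sn$ since both support functions are continuous.

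First I would show that $M \in \N$ implies success. If $M \in \N$, pick $A^\ast \in G$ with $M \in \N_C(A^\ast)$; by the duality fact $A^\ast$ is an optimum of \eqref{eq:iden1}. When the maximizer is unique this forces $\hat A = A^\ast$, so $\hat A$ satisfies every constraint, and — since a unique maximizer of a linear functional over a convex set is an exposed point, hence extreme — the program succeeds in the stated sense. The only delicate point is uniqueness, which holds for Lebesgue-almost-all $M$: the support function $h_C(M) = \max_{A \in C}\langle M, A\rangle$ is finite and convex, hence differentiable off a null set, and at each point of differentiability its gradient is the unique maximizer; equivalently, as recalled in Section~\ref{subsec:appsdef}, the normal cones at the extreme points of $C$ partition $\Sn$ up to lower-dimensional overlaps, and when $M$ is interior to one of them the maximizer is exactly that extreme point.

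Conversely, success means $\hat A$ is an extreme point of $C$ with all constraints active, so $\hat A \in G$ and $M \in \N_C(\hat A) \subseteq \N$ by the duality fact; thus $\{M : \mathrm{success}\} \subseteq \N$, and by the previous paragraph the two sets coincide outside the null set of $M$ for which \eqref{eq:iden1} has a non-unique maximizer. For the probability claim: $\N$ is a union of cones, so the event $\{M \in \N\}$ depends only on the direction $M/\|M\|$; since the upper-triangular entries of $M$ are i.i.d.\ standard Gaussians, the law of $M$ is invariant under the orthogonal group acting on the coordinate space $\R^{n+1 \choose 2} \simeq \Sn$, so $M/\|M\|$ is uniform on the unit sphere of $\Sn$. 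Hence $\Pr(M \in \N)$ is exactly the normalized solid-angle measure of $\N$, i.e., ``the fraction of $\Sn$ covered by $\N$'', and since the exceptional set of $M$ has probability zero we get $\Pr(\mathrm{success}) = \Pr(M \in \N)$.

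The main obstacle, such as it is, is the almost-sure uniqueness of the maximizer of a random linear functional over $C$: it is what upgrades ``some optimal point lies in $G$'' to ``the returned $\hat A$ lies in $G$'', and what turns the probability statement from a one-sided bound into an equality. I would settle it through the a.e.\ differentiability of the support function, which is the shortest route and also exhibits the exceptional set of $M$ as genuinely negligible under any absolutely continuous law; everything else is bookkeeping around the normal-cone duality already in hand.
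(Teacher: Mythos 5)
Your proof is correct and follows the same route as the paper, whose entire argument is the single normal-cone duality fact that $\hat{A}$ maximizes $\langle M, \cdot\rangle$ over $C$ if and only if $M \in \N_C(\hat{A})$. The additional care you take with the almost-sure uniqueness of the maximizer (via a.e.\ differentiability of the support function) and with the rotational invariance of the Gaussian law is sound and in fact supplies details that the paper's two-sentence proof leaves implicit.
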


\begin{proof}
The proof follows from standard results in convex analysis.  In particular we appeal to the fact that a linear functional defined by $M$ achieves its maximum at $\hat{A} \in C$ if and only if $M \in \N_C(\hat{A})$.
\end{proof}

\begin{figure}
\begin{center}
\epsfig{file=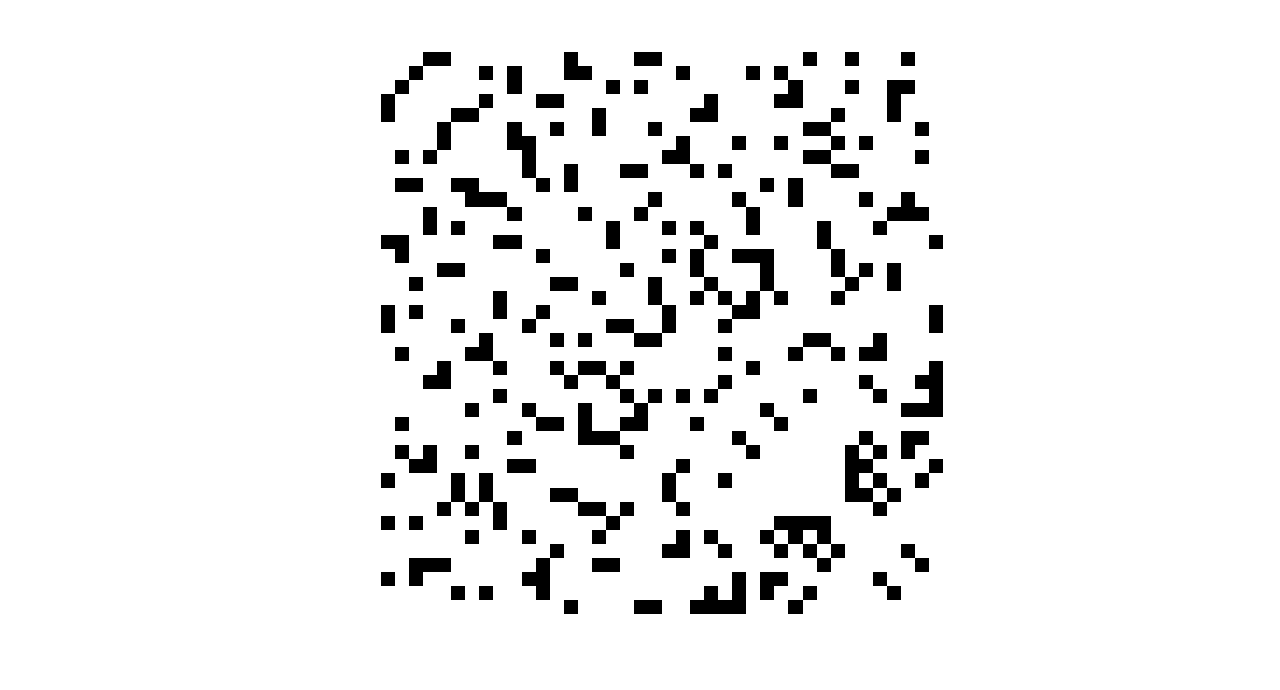,width=10cm,height=5cm} \caption{An adjacency matrix of a sparse, well-connected graph example obtained using the approach described in Section~\ref{subsec:id}: The weights of this graph lie in the range $[0,1]$, the black points represent edges with nonzero weight, and the white points denote absence of edges.  The (weighted) degree of each node is $8$, the average number of nonzero (weighted) edges per node is $8.4$, the second-smallest eigenvalue of the Laplacian is $4$, and the weighted diameter is $3$.} \label{fig:expander}
\end{center}
\end{figure}

As a corollary of this result we observe that if the invariant information provided exactly characterizes the convex hull of a graph $\mathcal{G}$, then the set $C$ above is the convex hull $\mathcal{C}(\mathcal{G})$ of the graph $\mathcal{G}$.  In such cases the convex program given by \eqref{eq:iden1} produces an adjacency matrix representing $\mathcal{G}$ with probability one.
Next we provide the results of a simple experiment that demonstrates the effectiveness of our approach in generating sparse graphs with large spectral gap.

\textbf{Example.} In this example we aim to construct graphs on $n = 40$ nodes with adjacency matrices in $\A$ that have degree $d = 8$, node weights equal to zero, and the second-smallest eigenvalue of the Laplacian being larger than $\epsilon = 4$.  The goal is to produce relatively \emph{sparse} graphs that satisfy these constraints.  The specified constraints can be used to construct a convex set as follows:
\begin{equation*}
C = \{A: A \in \A, ~ \tfrac{1}{8} A \ones = \ones, ~ \lambda_{n-1}(L_A) \geq 4, ~ A_{ii} = 0 ~ \forall i\}.
\end{equation*}
By maximizing $100$ random linear functionals over this set we obtained graphs in all $100$ cases with total degree equal to $8$, and in $98$ of the $100$ cases with the minimum eigenvalue of the Laplacian equal to $4$ (it is greater than $4$ in the remaining two cases).  Interestingly the average number of edges with nonzero weight incident on each node is $8.8$ over these $100$ trials, thus providing very sparse graphs that are well-connected.  Figure~\ref{fig:expander} gives an example of a graph generated randomly using this procedure; the average number of nonzero (weighted) edges per node of this graph is $8.4$, and its (weighted) diameter is $3$.  Therefore this approach empirically yields sparse graphs that are well-connected (i.e., with a large spectral gap).

We would like to point out here a different approach to constructing well-connected graphs, which tries to add edges from a subset of candidate edges to maximize the second eigenvalue of the graph Laplacian \cite{GhoB2006}.  An interesting question is to understand the structure of the extreme points of the set $C$ in this example as the graph size and the degree $(n,d)$ grow large, with $\epsilon$ held constant.  For example it may be useful to compute the fraction of the normal cones at those extreme points corresponding to expander graphs.  More generally it is of interest to give conditions on constraint sets under which the procedure described above is successful in providing graphs that satisfy all the constraints with high probability.


\subsection{Application: Graph Hypothesis Testing}
\label{subsec:ht}

Finally we give a solution to the hypothesis testing problem in which we have two families of graphs, and the goal is to decide which of these families offers a ``better explanation'' for a given candidate ``sample'' graph.

\begin{problem}
Let $\mathcal{F}_1$ and $\mathcal{F}_2$ denote two families of graphs characterized in terms of invariants $\{h_j^1\}$ and $\{h_j^2\}$ respectively; for example, a family could be specified as some set of graphs that have similar spectral distributions, similar degree sequences, and similar girths.  Given a graph $\mathcal{G}$, which of the two families $\mathcal{F}_1, \mathcal{F}_2$ of graphs is more similar to $\mathcal{G}$?
\end{problem}

We emphasize that the sets of invariants that characterize $\mathcal{F}_1,\mathcal{F}_2$ may in general be very different.  Note that this question is not completely well-posed, as there may be different answers depending on one's notion of similarity.  In order to address this point, we need to develop a statistical theory for graphs.  In such a setting one could phrase this question formally as a statistical hypothesis testing problem with appropriate error metrics.  Our focus in the present paper is on proposing a convex optimization solution to the hypothesis testing based on convex graph invariants, and using a reasonable notion of similarity.

\begin{solution}
Let $A \in \Sn$ be an adjacency matrix that represents the graph $\mathcal{G}$.  We construct invariant convex sets $C_1$ and $C_2$ based on the sets of invariants $\{h_j^1\}, \{h_j^2\}$ in an analogous manner to the construction described in the solution to the graph construction problem of Section~\ref{subsec:id}.  As before one could employ further tractable relaxations of these sets if they are intractable to compute.  Assuming that these convex constraint sets that summarize the families $\mathcal{F}_1$ and $\mathcal{F}_2$ are compact, we declare that $\mathcal{F}_1$ is closer to $\mathcal{G}$ than $\mathcal{F}_2$ if the following holds:
\begin{equation}
\max_{M \in C_1} ~ \tr(A M) ~~~ \geq ~~~ \max_{M \in C_2} ~ \tr(A M). \label{eq:ht}
\end{equation}
Naturally we declare the opposite result if the inequality is switched.  Computing the two sides in this test can be done via convex optimization, and this computation is tractable if $C_1,C_2$ are tractable to characterize.  \end{solution}

Our choice of the function to be maximized over $C_1,C_2$ is motivated by a similar procedure in statistics and signal processing, which goes by the name of ``matched filtering.''  Of course other (convex invariant) cost functions can also be optimized depending on one's notion of similarity.  We point out two advantages of this approach to hypothesis testing.  First the two families of graphs can be specified in terms of different sets of invariants, as seen in these examples.  Second the optimal solutions of the convex programs in \eqref{eq:ht} in fact provide \emph{approximations} to the graph $\mathcal{G}$ by elements in the families $\mathcal{F}_1,\mathcal{F}_2$.  We give illustrations of these points in our examples, which we describe next.

\textbf{Example.}  Let $A_{\mathrm{cycle}}$ denote the adjacency matrix of a $16$-node unweighted cycle.  As our first family we consider the set of cycles on $16$ nodes.  We approximate this family by the set of graphs that are triangle-free (in the sense described in Section~\ref{subsec:exics}), have degree equal to $2$, and have the same spectrum as a $16$-node unweighted cycle.  Therefore the set $C_1$ is defined as follows:
\begin{equation*}
C_1 = \{A : A \in \A, ~ A_{ii} = 0 ~ \forall i, ~ \tfrac{1}{2} A \ones = \ones, ~ \Theta_{K_3}(A) \leq 4\} \cap \mathcal{E}(A_{\mathrm{cycle}}).
\end{equation*}
As our second family, we consider sparse well-connected graphs on $16$ nodes with maximum weighted degree less than or equal to $2.5$, and with the second-smallest eigenvalue of the Laplacian bounded below by $1.1$:
\begin{equation*}
C_2 = \{A: A \in \A, ~ A_{ii} = 0 ~ \forall i, ~ (A \ones)_i \leq 2.5 ~ \forall i, ~ \lambda_{n-1}(L_A) \geq 1.1\}.
\end{equation*}
Applying the solution described above to a test graph given by a $16$-node unweighted path graph (i.e., an unweighted cycle with an edge removed, see Figure~\ref{fig:ht}), we find that the path graph is ``closer'' to the family $\mathcal{F}_1$ of cycles approximated by the set $C_1$ than it is to the family $\mathcal{F}_2$.  This agrees with the intuition that a path graph is not well-connected, and is only one edge away from being a cycle.  We also point out that the optimal solution to the convex program on the left-hand-side of the test \eqref{eq:ht} is in fact an unweighted $16$-node cycle with the missing edge in the path graph added as an extra edge.  Next we consider a different test graph -- a $16$-node cycle with two additional edges across diametrically opposite nodes, i.e., assuming we label the nodes of the $16$-node cycle we add edges between nodes $1$ and $9$, and between nodes $5$ and $13$ (again see Figure~\ref{fig:ht}).  While this graph is only two edges away from being a cycle, the edges connecting far away nodes dramatically increase the connectivity of the graph.  In this case we find using the convex programming hypothesis test \eqref{eq:ht} that the family $\mathcal{F}_2$ is in fact closer than $\mathcal{F}_1$ to the sample graph.  Interestingly, the optimal solution to the convex program on the left-hand-side of the test \eqref{eq:ht} is again an unweighted $16$-node cycle, this time with the two additional edges removed.

In order to thoroughly address the graph hypothesis testing problem, we need to develop a framework of statistical models over spaces of graphs.  With a proper statistical framework in place we can evaluate the \emph{probability of error} achieved by a hypothesis-testing algorithm with respect to a suitable error-metric, analogous to similar methods developed in other classical decision-theoretic problems in statistics.  We defer these questions to a separate paper.


\section{Discussion}
\label{sec:conc}

In this paper we introduced and studied convex graph invariants, which are graph invariants that are convex functions of the adjacency matrix.  Convex invariants form a rich subset of the set of all graph invariants, and they are useful in developing a unified computational framework based on convex optimization to solve a number of graph problems.  In particular we described three canonical problems involving the structural properties of graphs, namely, graph construction given constraints, graph deconvolution of a composite graph into individual components, and graph hypothesis testing in which the objective is decide which of two given families of graphs offers a better explanation for a new sample graph.  We presented convex optimization solutions to all of these problems, with convex graph invariants playing a prominent role.  These solutions provided attractive empirical performance, and the resulting convex programs are tractable and can be solved using general-purpose off-the-shelf software for moderate size instances.

We are presently investigating several research questions arising from this paper.  It is of interest to provide theoretical guarantees on the performance of our convex programs in Section~\ref{sec:appscgi} in solving the problems of Section~\ref{sec:apps}.  For example which families of graphs can be deconvolved or efficiently sampled from using convex optimization?  It is also preferable to develop special-purpose software to efficiently compute some subset of convex graph invariants, in order to enable the solution of very large problem instances.  Finally in order to properly analyze the success of algorithms for graph deconvolution, sampling, and hypothesis testing, it is important to develop a formal statistical framework for graphs.



\appendix

\section{Properties of Convex Symmetric Functions}\label{app:csf}

A \emph{convex symmetric function} is a convex function that is invariant with respect to a permutation of the argument:
\begin{definition}
A function $g: \R^n \rightarrow \R$ is a \emph{convex symmetric function} if it is convex, and if for any $\bx \in \R^n$ it holds that $g(\Pi \bx) = g(\bx)$ for all permutation matrices $\Pi \in \mathrm{Sym}(n)$.
\end{definition}
The properties of such functions are well-known in the literature on convex analysis and optimization, and they arise in many applications.  We briefly describe some of these properties and applications here.

An important class of convex symmetric functions is the set of linear functionals given by \emph{monotone linear functionals}:
\begin{equation*}
g(\bx) = \mathbf{v}^T \overline{\bx},
\end{equation*}
where $\mathbf{v}_1 \geq \cdots \geq \mathbf{v}_n$.  Recall that $\overline{\bx}$ is the vector obtained by sorting the entries of $\bx$ in descending order. Monotone linear functionals can be used to express any convex symmetric function.  Specifically, let $\mathcal{M} \subset \R^n$ represent the cone of monotone decreasing vectors in $\R^n$.  Then for any convex symmetric function $g: \R^n \rightarrow \R$, we have that
\begin{equation*}
g(\bx) = \sup_{\mathbf{v} \in \mathcal{M}} ~ \mathbf{v}^T \overline{\bx} - \alpha_{\mathbf{v}}.
\end{equation*}
This statement is a simple consequence of the separation theorem from convex analysis \cite{Roc1996}.  Monotone linear functionals in turn can be expressed as the nonnegative \emph{sum} of even more elementary functions called \emph{distribution functions}, which are defined as follows:
\begin{equation*}
g_k(\bx) = \sum_{i=1}^k ~ (\overline{\bx})_i.
\end{equation*}
These functions are closely related to the notion of conditional value-at-risk \cite{RocU2000}, which in turn is computed using
quantiles of probability distributions.

Convex symmetric functions are intimately connected with the concept of \emph{majorization} \cite{MarO1979}.  There are many equivalent characterizations of majorization \cite{Dav1957,Lew1995}, and we briefly mention some of these next.  A vector $\bx \in \R^n$ is said to majorize another vector $\by \in \R^n$ if
\begin{equation*}
g_k(\bx) \geq g_k(\by), ~ \forall k = 1 ,\dots, n-1 ~~~~ \mathrm{and} ~~~~ g_n(\bx) = g_n(\by).
\end{equation*}
The \emph{permutahedron} of a vector $\bx \in \R^n$ is the convex hull of all permutations of $\bx$, and is given by the set of vectors in $\R^n$ that are majorized by $\bx$.  Thus, convex constraints given by distribution functions provide a simple characterization of the permutahedron generated by $\bx$.  Majorization is also closely related to the notion of \emph{Lorenz dominance}; a (typically nonnegative) vector $\bx \in \R^n$ is said to Lorenz-dominate $\by \in \R^p$ if $-\bx$ is majorized by $-\by$.  Lorenz dominance is used to measure the level of inequality in distributions, i.e., if a distribution $\bx$ Lorenz-dominates a distribution $\by$ then $\bx$ is ``more equal'' than $\by$ (see also the Gini coefficient, which is used to measure inequalities in countries).

A convex symmetric function is an example of a \emph{Schur-convex function}, which is a function $f$ such that $f(\bx) \geq f(\by)$ whenever $\bx$ majorizes $\by$.  Hence a Schur-convex function preserves order with respect to majorization.  Consequently, such functions arise in many applications in which majorization plays a prominent role \cite{MarO1979}.  We note that the functions that are both convex and Schur-convex are exactly the convex symmetric functions.

A fairly similar set of results hold for convex functions of symmetric matrices that are invariant under conjugation of the argument by orthogonal matrices, i.e., convex functions $f: \Sn \rightarrow \R$ such that $f(V A V^T) = f(A)$ for all $A \in \Sn$ and for all $\Pi \in \mathrm{Sym}(n)$.



\end{document}